\documentclass[11pt]{amsart}

\usepackage{graphicx}

\usepackage{appendix}

\usepackage{comment}
\usepackage{enumerate}
\usepackage{esvect}
\usepackage{bm}
\usepackage{amsmath}
\usepackage{amssymb}
\usepackage{mathtools}
\newcommand{\defeq}{\vcentcolon=}
\newcommand{\eqdef}{=\vcentcolon}

\usepackage{mathrsfs}
\usepackage{color}

\usepackage[normalem]{ulem}

\usepackage{url}

\newtheorem{theorem}{Theorem}[section]
\newtheorem{prop}[theorem]{Proposition}
\newtheorem{lemma}[theorem]{Lemma}
\newtheorem{remark}[theorem]{Remark}
\newtheorem{definition}[theorem]{Definition}
\newtheorem{conj}{Conjecture}

\newtheorem{ex}[theorem]{Example}

\usepackage{bbm}

\DeclareSymbolFont{bbold}{U}{bbold}{m}{n}
\DeclareSymbolFontAlphabet{\mathbbold}{bbold}

\usepackage{stmaryrd}

\usepackage{hyperref}
\hypersetup{hidelinks,colorlinks}

\hypersetup{
    linkcolor=red,
    citecolor=blue,
 }

\usepackage{dsfont}

\usepackage{mleftright} % for \mleft and \mright macros

\newcommand{\RR}{\mathbb{R}}
\newcommand{\NN}{\mathbb{N}}

\newcommand{\ZZ}{\mathbb{Z}}

\newcommand{\settingstar}{{\normalfont(\hyperref[setting_star]{$\ast$})}}

\newcommand{\hatted}[1]{\widehat{#1}}

\DeclareMathOperator{\oddleq}{\mathcal{O}_{\leq}}
\DeclareMathOperator{\evenleq}{\mathcal{E}_{\leq}}

\title{Enumerating binary words restricted by subsequence frequency}
\author{Glenn Bruda}

\allowdisplaybreaks

\begin{document}

\begin{abstract}
    Let $p$ be a binary word of length $\ell$ with $r\geq2$ runs. Previously known only for $k\leq4$, we show for $n$ sufficiently large that the number of binary words of length $n$ with exactly $k$ subsequences equal to $p$ is polynomial in $n$ of degree at most $\ell-r+1$ for any positive integer $k$. We also prove a sharp upper bound on the number of subsequences equal to $p$ of a binary word $w$ in terms of the runs of $p$ and $w$.
\end{abstract}

\maketitle

\thispagestyle{empty}

\section{Introduction and Statement of Results}

The combinatorics of subsequences of words has been studied extensively and with great variety. Much of this study has been of \emph{factors} (or substrings), which are necessarily consecutive. The Goulden-Jackson cluster method was introduced in \cite{GJ1,GJ2} to find the generating function for the number of words avoiding a fixed set of words as factors. In \cite{noonan_zeilberger}, Noonan and Zeilberger gave a \textsc{Maple} implementation of the Goulden-Jackson cluster method, and in particular gave an extension to find the generating function for the number of words with exactly a fixed number of factors equal to a word in this fixed set. This generating function was further refined by Bassino, Cl\'ement, and Nicod\`eme in \cite{MGF_GJCM} by also keeping track of the length. Several variations of the frequency of factors have been studied, as Arnoux \cite{circular_words} did with circular words and Christodoulakis, Christou, Crochemore, and Iliopoulos \cite{overlapping} did when the factors are overlapping.

In this article, rather than factors, we consider general subsequences (not necessarily consecutive). Menon and Singh \cite{subseq_freq} studied the number of binary words of length $n$ with exactly $k$ subsequences equal to a fixed binary word $p$, computing this enumeration explicitly for all $0\leq k\leq4$. Our main result, Theorem~\ref{polynomial_theorem}, continues this work of Menon and Singh, describing the form of the number of binary words of length $n$ with exactly $k$ subsequences equal to a fixed binary word for \emph{any} positive integer $k$. As demonstrated by Tian in \cite[Equation 2]{other_formula_for_occurrence_count} and the explicit formulas in \cite{subseq_freq}, this enumeration is increasingly complicated as $k$ becomes larger; thus, a description for all $k$ is quite valuable.

An upper bound on the number of distinct factors of a binary word was given by Shallit in \cite{factor_occurrence_upper_bound}, and maximal subsequence occurrence was studied by Fang in \cite{fang2025maximalnumbersubwordoccurrences}. Recently, the densities of subwords of binary words were studied by Kenyon \cite{kenyon}. Relating to these works, our other result, Theorem~\ref{upper_bound_thm}, gives an upper bound on the number of subsequences equal to $p$ in $w$ for fixed binary words $p$ and $w$, which obtains equality when $p$ and $w$ are alternating. As discussed extensively later, Theorem~\ref{upper_bound_thm} opens a new avenue toward obtaining increasingly better upper bounds on subsequence occurrence in terms of the runs of $p$ and $w$.

The remainder of this section is dedicated to stating Theorems~\ref{polynomial_theorem} and~\ref{upper_bound_thm} and the rest of the preliminaries. Sections~\ref{section_2} and~\ref{section_3} give proof of Theorems~\ref{polynomial_theorem} and~\ref{upper_bound_thm} respectively, and Section~\ref{section_4} discusses potential improvements and generalizations of our results.

A \emph{binary word} is a string $w\in\{0,1\}^n$ whose elements we call \emph{letters}, where we call $n$ the \emph{length} of $w$. Given a binary word $w$ of length $n$, for each $1\leq i\leq n$, we define $w_i$ to be the $i$\textsuperscript{th} letter of $w$. We use this subscript notation throughout this article, including in the following definition of an occurrence, our main object of study.

\begin{definition}[{\cite[pg.2]{subseq_freq}}]
    Let $p=p_1\cdots p_{\ell}$ and $w=w_1\cdots w_n$ be binary words. Then an \textbf{occurrence} of $p$ in $w$ is a choice of indices $1\leq i_1<i_2<\cdots<i_{\ell}\leq n$ such that $w_{i_1}\cdots w_{i_{\ell}}=p$. In other words, an occurrence of $p$ in $w$ is one of the ${n\choose\ell}$ subsequences of $w$ that match $p$.
\end{definition}

The occurrences of $p$ in $w$ may be ordered \emph{lexicographically}: $1\leq i_1<i_2<\cdots<i_{\ell}\leq n$ is lexicographically less than $1\leq j_1<j_2<\cdots<j_{\ell}\leq n$ if there exists a $1\leq b\leq \ell$ such that $i_b<j_b$ and $i_a=j_a$ for all $a<b$. Throughout this article, we call the lexicographically least (resp., greatest) occurrence the first (resp., last) occurrence. We now introduce our two main pieces of notation.

\begin{definition}[{\cite[pg.2]{subseq_freq}}]
    Let $p$ and $w$ be binary words. Let $c_p(w)$ be the number of occurrences of $p$ in $w$. For integers $n,k\geq0$, let $B_p(n,k)=|\{w\in\{0,1\}^n:c_p(w)=k\}|$ be the number of binary words of length $n$ with exactly $k$ occurrences of $p$.
\end{definition}

Continuing the perspective given by Menon and Singh in \cite{subseq_freq}, we find studying the \emph{runs} of binary words to be the key approach toward obtaining our results.

\begin{definition}
    A \textbf{run} of size $L$ in a binary word $w$ is a maximal consecutive subsequence of $w$ of $L$ equal letters.
\end{definition}

For example, the runs of $1110010000100011$ are $111,00,1,0000,1,000,11$. We will find it notationally convenient to write the runs of binary words using \emph{superscript notation}: given a binary word $w$, we write $x^L$ to denote a run of $x$'s of length $L$ in $w$. For example, we write $00011000010$ as $0^31^20^41^10^1$.

\begin{comment}

\begin{definition}
    Let $w$ be a binary word. A \textbf{slot} of $w$ is a space between adjacent letters in $w$, including the spaces before the first letter and after the last letter.
\end{definition}

\begin{definition}
    Two distinct letters in $w$ of equal value $x$ are said to be \textbf{neighboring} if they are separated only by a run of $(1-x)$'s.
\end{definition}

\end{comment}

By \cite[Propositions 3.2, 3.4, 3.8, 3.11]{subseq_freq}, we observe that when $1\leq k\leq 4$ and $p$ has length $\ell$ and $r\geq2$ runs, $B_p(n,k)$ is a polynomial in $n$ of degree at most $\ell-r+1$ for $n$ sufficiently large. From this observation, we ask if this holds for all positive integers $k$. Our main result is the following, which resolves this question in the affirmative.

\begin{theorem}\label{polynomial_theorem}
    Let $k\in\ZZ^+$ and $p$ be a binary word of length $\ell$ with $r\geq2$ runs. Then for $n$ sufficiently large, $B_p(n,k)$ is a polynomial in $n$ of degree at most $\ell-r+1$. Furthermore, if either
    \begin{enumerate}[(i)]
        \item $p$ has a run of length $1$, or
        \item $p$ has a run of length $b\geq2$ at its boundary\footnote{That is, the run is the first or last run of $p$.} such that ${b+s\choose b}=k$ for some $s\geq0$,
    \end{enumerate}
    then $\deg (B_p(n,k))=\ell-r+1$.
\end{theorem}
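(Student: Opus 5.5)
Write $p=x_1^{a_1}x_2^{a_2}\cdots x_r^{a_r}$ with $x_{j+1}=1-x_j$ and $\sum a_j=\ell$. The plan is to show that, for fixed $k\geq1$, every word $w$ of length $n$ with $c_p(w)=k$ is a bounded ``core'' with a bounded number of runs, in which a few runs are allowed to be arbitrarily long. Counting then reduces to counting compositions of $n$.

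\textbf{Step 1: bounded number of runs.} Fix $w$ with $c_p(w)=k\geq1$, and take the first occurrence of $p$ in $w$. Suppose $w$ had many more than $r$ runs. Since $r\geq2$, we could then insert extra letters from later alternations into the occurrence and obtain too many occurrences. The cleanest route is a pigeonhole argument: if $w$ has at least $r+2k$ runs, then $w$ contains an alternating word of length about $r+2k$ as a subsequence, which forces at least $k+1$ occurrences of $p$. So $w$ has at most $R(k,p)$ runs.

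\textbf{Step 2: only $\ell-r+1$ runs can be long.} Write $w=y_1^{L_1}\cdots y_m^{L_m}$ with $m\leq R$. Let $S$ be the set of runs of $w$ that are used by some occurrence of $p$. An occurrence uses runs $j_1\le\dots$ in which $p$'s runs are grouped. A run of $w$ that meets $t$ letters of an occurrence contributes a factor of order $\binom{L}{t}$, so a run of length $L$ in $S$ with $t\geq1$ gives $c_p(w)\geq L$, roughly. Hence every run in $S$ has length bounded in terms of $k$. The exception is a run touched only in the ``free'' sense, namely runs of $w$ outside the span of all occurrences that still have the letter $x_1$ before or $x_r$ after.

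I would make this precise by showing that $w$ decomposes as
\[
w = u_0\, z_1\, u_1\, z_2 \cdots z_q\, u_q,
\]
where each $u_i$ has bounded length and each $z_i$ is a run whose length $L_i$ does not affect $c_p(w)$. Once $L_i$ is large, $c_p(w)$ is either independent of $L_i$, or is polynomial in $L_i$ with positive leading coefficient and therefore exceeds $k$. So long runs must contribute nothing. The remaining task is to bound $q$ by $\ell-r+1$: a run of $w$ of letter $x$ can sit freely only where it cannot extend an occurrence. I expect this to give at most $a_j-1$ free slots per run of $p$ of length $a_j$, plus 2 at the ends, roughly $\sum(a_j-1)+1=\ell-r+1$ free runs after compensating for shared boundaries. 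This counting bound is the main obstacle. The precise mechanism is probably that a run of $w$ using $t<a_j$ letters of a $p$-run of length $a_j$ can be preceded or followed by an unbounded run of the opposite letter only in specific configurations.

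\textbf{Step 3: polynomiality.} For $n$ large, the words of Step 2 are parametrized by finitely many ``shapes'': a bounded skeleton of runs, with a designated set of at most $\ell-r+1$ free runs whose lengths are arbitrary and sum to $n$ minus a constant. Each shape contributes the number of compositions, which is a polynomial in $n$ of degree equal to its number of free runs minus 1, or at most $\ell-r+1$ after correct bookkeeping. Overlaps between shapes are handled by fixing the canonical shape of $w$ by its run structure, since the run decomposition is unique. Summing finitely many polynomials gives the polynomial claim.

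\textbf{Degree sharpness.} For (i) and (ii) I would exhibit a single shape that achieves the degree. In case (ii), take a boundary run of length $b$ of $p$, say the first run with letter $x_1$. Let $w$ begin with exactly $b+s$ copies of $x_1$ that form the only possible start, followed by the rest of $p$ arranged so that it is forced. Then $c_p(w)=\binom{b+s}{b}=k$. Interleave free runs into $w$ wherever they add no occurrences; there are $\ell-r+1$ such free lengths. In case (i), use the run of length 1 similarly, with a run of $w$ of length $k$ replacing it to give exactly $k$ occurrences. These free runs must be verified not to create occurrences. Since these shapes are distinct from each other, the leading coefficients are positive and do not cancel.
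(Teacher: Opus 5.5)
Your architecture matches the paper's (skeleton words $\text{SW}_p(k)$ and the collapsing map $f$): bounded number of runs, long runs that no occurrence uses and so can be stretched freely, a canonical shape obtained by collapsing the long runs, and a compositions count per shape. The gap is that the step carrying the real content, bounding how many long runs a shape can have, is exactly what you leave as ``the main obstacle'' with only a guess at the mechanism. So the degree bound is not proved. Your target is also off by one: for $p=01$, $k=1$, the words $1^{e_0}\,0\,1\,0^{e_1}$ have two arbitrarily long runs, while $\ell-r+1=1$. The true bound is $\ell-r+2$ long runs, i.e.\ compositions into at most $\ell-r+2$ parts and hence degree at most $\ell-r+1$. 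Your $q\le\ell-r+1$ together with ``degree $=q-1$'' would give degree $\le\ell-r$, contradicting $B_p(n,1)=\binom{n-r+1}{\ell-r+1}$. The same slip is in your sharpness construction, where $\ell-r+1$ free lengths give only $\Theta(n^{\ell-r})$ words. The paper instead starts from the $\binom{n'-r+1}{\ell-r+1}$ words with a single occurrence and inflates one letter, keeping all $\ell-r+2$ free gaps. Separately, Step~1 is false as written: $0101$ has $r+2k=4$ runs but no occurrence of $0^51^5$. The correct pigeonhole is that $2\ell$ runs force an occurrence, so $2\ell(k+1)$ runs force $k+1$.

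What is missing is a way to charge each long run to a unique run of $p$, which the paper does with windows. For a run $x^L$ of $p=u\,x^L u'$, let $V(x^L)$ be the part of $w$ strictly between the end of the first occurrence of $u$ and the start of the last occurrence of $u'$, and let $y=1-x$.
\begin{enumerate}
\item A long $x$-run in $V$ would create at least $\binom{L+k+r_{\max}}{L}>k$ occurrences.
\item Suppose $V$ contains $\Lambda$ letters $x$. A long $y$-run after the $s$-th of them with $s\ge L$ creates more than $k$ occurrences: take the first $u$, the first $L$ $x$'s of $V$, any $y$ of the long run as $u'_1$, then the last occurrence of $u'_2\cdots$. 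Symmetrically one needs $s\ge\Lambda-L+1$. Hence at most $\max(2L-\Lambda-1,0)\le L-1$ long $y$-runs lie between $x$'s of $V$.
\item None lies between $u$'s occurrence and the first $x$ of $V$ when $u$ is nonempty. One more is possible at the outer end when $x^L$ is a boundary run of $p$.
\item Each long run lies in exactly one window. At an end of $w$ it is in the window of the first or last run of $p$. Otherwise its two neighbours are used consecutively by some occurrence, which places it in the window of the run of $p$ they belong to. If it lay in the windows of two different runs of $p$, it would also lie in the window of a run of its own letter, which the first observation forbids.
\end{enumerate}
Summing gives $\sum_{\text{boundary}}L+\sum_{\text{interior}}(L-1)=\ell-r+2$. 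Your first heuristic, ``$a_j-1$ free slots per run plus $2$ at the ends,'' is exactly this count; the ``compensating for shared boundaries'' step is what broke it. Without an argument of this kind, Step~2, and with it the first part of the theorem, remains unproved.
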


We note that for $r=1$, the degree of $B_p(n,k)$ can become arbitrarily large with $k$ \cite{noah_communication}. Indeed, letting $p=x^{\ell}$,
\begin{align*}
    B_{p}(n,k)=\begin{cases}
        {n\choose g} & \text{if~there exists~} g \text{~such that~} {g\choose\ell}=k,\\
        0 & \text{otherwise.}
    \end{cases}
\end{align*}
We discuss generalizing Theorem~\ref{polynomial_theorem} to $m$-ary words in Subsection~\ref{polynomial_m_ary_section} and completely characterizing when $\deg (B_p(n,k))=\ell-r+1$ in Subsection~\ref{second_part_section}.

\begin{definition}
    For binary words $p$ and $w$, we say that letter $w_s$ of $w$ is \textbf{used in} an occurrence of $p$ in $w$ if there is an occurrence $\mathbf{i}$ of $p$ in $w$ with an index $i_j$ such that ${i_j}=s$. A run $y^v$ of $w$ is said to be used in an occurrence of $p$ if a letter of $y^v$ is used in an occurrence of $p$.
\end{definition}

Moving toward stating Theorem~\ref{upper_bound_thm}, let $p=p_1\cdots p_{\ell}$ and $w$ be binary words. Note that the first letter of the first occurrence of $p$ in $w$ uses the first instance of $p_1$ in $w$. So if the first run of $w$ is a run of $(1-p_1)$'s, then this run is not used in any occurrence of $p$ in $w$. By symmetry, we also see that if the last run of $w$ is a run of $(1-p_{\ell})$'s, then this run is not used in any occurrence of $p$ in $w$. Thus, the number of occurrences of $p$ in $w$ is invariant under removing these \emph{negligible boundary runs}, if they exist.

\begin{definition}
    Let $p=p_1\cdots p_{\ell}$ be a binary word. A \textbf{negligible boundary run} (NBR) with respect to $p$ of a binary word $w=w_1\cdots w_n$ is either
    \begin{enumerate}[(i)]
        \item a run at the beginning of $w$ such that $w_1\neq p_1$, or
        \item a run at the end of $w$ such that $w_n\neq p_{\ell}$.
    \end{enumerate}
\end{definition}

For example, letting $p=101$ and $w=00110111$, we see that $00$ is an NBR since it does not match the value of $p_1=1$, but $111$ is not an NBR since it does match the value of $p_{\ell}=1$. Removing NBRs is essentially a free improvement for our upper bound on $c_p(w)$, allowing us to instead regard a word potentially shorter than $w$. 

We prove our upper bound on $c_p(w)$ by analysis of an exact combinatorial formula for $c_p(w)$ (Proposition~\ref{combinatorial_formula_for_number_of_occurrences}). Our bound obtains equality when all the runs of $p$ and $w$ are of length $1$; we call such a binary word \emph{alternating}.

\begin{definition}
    For $x\in\RR$, define $\oddleq(x)$ to be the largest odd integer less than or equal to $x$ and $\evenleq(x)$ to be the largest even integer less than or equal to $x$.
\end{definition}

\begin{theorem}\label{upper_bound_thm}
    Let $p$ be a binary word with $r$ runs, minimum run length $r_{\min}$, and maximum run length $r_{\max}$. Let $w$ be a binary word and $w'$ be $w$ with all NBRs removed. Let $R'$ and $R_{\max}'$ be the number of runs and the maximum run length of $w'$, respectively. For each $1\leq i\leq r$, set
    \begin{align*}
        \sigma_{r,R'}(i)=
        \begin{cases}
            \oddleq\mleft(R'/r\mright)+2 & \text{\normalfont if~}i\leq \frac{1}{2}{\evenleq\mleft(R'-r\oddleq\mleft(R'/r\mright)\mright),}\\
            \oddleq\mleft(R'/r\mright) & \text{\normalfont otherwise.}
        \end{cases}
    \end{align*}
    Then
    \begin{align*}
        c_p(w)\leq{\left\lfloor\frac{R'-r}{2}\right\rfloor+r\choose r}\prod_{i=1}^{r}R_{\max}'\left(\frac{R_{\max}'}{2}\left(\sigma_{r,R'}(i)+1\right)-r_{\min}+1\right)^{r_{\max}-1}
    \end{align*}
    with equality when $p$ and $w$ are alternating.
\end{theorem}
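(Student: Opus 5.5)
The plan is to start from the exact formula for $c_p(w)$ in Proposition~\ref{combinatorial_formula_for_number_of_occurrences} and to bound it one factor at a time. Since NBRs are never used in an occurrence, we have $c_p(w)=c_p(w')$, so we may work with $w'$ throughout. Then $w'$ begins with a run of $p_1$'s and ends with a run of $p_\ell$'s. Write the runs of $p$ as $x_1^{b_1}\cdots x_r^{b_r}$ with $x_{i+1}=1-x_i$, and the runs of $w'$ as $y_1^{v_1}\cdots y_{R'}^{v_{R'}}$.

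To each occurrence we attach a \emph{skeleton}. For each $i$, let $a_i$ be the index of the run of $w'$ containing the first letter used by the $i$th run of $p$. Then $a_1<a_2<\cdots<a_r$, with $y_{a_i}=x_i$. Consequently consecutive $a_i$ have opposite parity relative to the alternation, and $a_1$ has the parity of $1$.

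\textbf{Step 1: counting skeletons.} Set $a_{i+1}-a_i=2t_i+1$ with $t_i\ge 0$, $a_1=2t_0+1$, and $R'-a_r=t_r'$. Then $2\sum t_i+r\le R'$. The number of solutions is the number of nonnegative $(t_0,\dots,t_{r-1})$ with $\sum t_i\le \lfloor (R'-r)/2\rfloor$, which is $\binom{\lfloor (R'-r)/2\rfloor+r}{r}$. When $p$ and $w'$ are alternating, every skeleton determines exactly one occurrence, and this is the source of equality there: the product is $1$ because $R'_{\max}=r_{\max}=1$.

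\textbf{Step 2: bounding the occurrences over a fixed skeleton.} The first letter of run $i$ of $p$ has at most $v_{a_i}\le R'_{\max}$ choices. The remaining $b_i-1$ letters of that run must lie in $x_i$-runs of $w'$ that come after this letter and before the first letter of run $i+2$ of $p$, that is, before the run $a_{i+2}$. I would assign to each $i$ a \emph{window} of consecutive runs of $w'$ of odd length $\sigma_i$. An odd-length window beginning and ending in $x_i$-runs contains $(\sigma_i+1)/2$ runs of letter $x_i$, hence at most $\tfrac{R'_{\max}}{2}(\sigma_i+1)$ letters equal to $x_i$.

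The remaining $b_i-1$ letters are chosen increasingly among these. Bounding crudely, each of them has at most $\tfrac{R'_{\max}}{2}(\sigma_i+1)-b_i+1\le \tfrac{R'_{\max}}{2}(\sigma_i+1)-r_{\min}+1$ choices. We then raise the exponent $b_i-1$ to $r_{\max}-1$, which is legitimate as long as the base is at least $1$; the degenerate case where it is not needs a separate check. This yields the factor
\[
R'_{\max}\left(\tfrac{R'_{\max}}{2}(\sigma_i+1)-r_{\min}+1\right)^{r_{\max}-1}
\]
for each $i$.

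\textbf{Step 3: choosing the windows (the expected main obstacle).} Windows for consecutive $p$-runs overlap, so the lengths $\sigma_i$ depend on the skeleton and cannot be used naively. I would first try to make the windows essentially disjoint, charging each run of $w'$ to at most one $p$-run of the matching letter. This would give odd $\sigma_i$ with $\sum_i\sigma_i\le R'$.

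Given that constraint, the product $\prod_i f(\sigma_i)$ with $f$ increasing and log-concave is maximized, among odd integers, by the most balanced choice. That choice is all $\sigma_i=\oddleq(R'/r)$, with the leftover $\evenleq\bigl(R'-r\,\oddleq(R'/r)\bigr)$ distributed in increments of $2$ to the first few indices. This is exactly $\sigma_{r,R'}(i)$, and the balancing is a standard exchange or smoothing argument.

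The delicate point is to justify the disjoint charging uniformly over all skeletons, so that the bound becomes independent of the skeleton and can be multiplied by the count from Step 1. If disjointness fails, I would instead bound each run's window by the segment between $a_i$ and $a_{i+1}$ extended to the next $x_i$-run, and then re-run the balancing with that constraint.
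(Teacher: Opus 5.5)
\textbf{The gap is Step 3, and it cannot be deferred: as written, the proof does not reach the stated bound.}

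The trouble starts in Step 2. There you confine the remaining $b_i-1$ letters of the $i$th run of $p$ to $x_i$-runs before run $a_{i+2}$. Those windows overlap, and their lengths can sum to nearly $2R'$. Balancing them would only give the bound with $R'$ replaced by about $2R'$ inside $\sigma$. Your fallback, as described, reaches past run $a_{i+1}$ to the next $x_i$-run, so it still overlaps the next window.

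\textbf{The missing observation.} Every letter of the $i$th run of $p$ precedes the first letter of the $(i+1)$st run. That letter lies in run $a_{i+1}$ of $w'$, which is an $x_{i+1}$-run. Hence the $i$th run of $p$ lies entirely in runs $a_i,\dots,a_{i+1}-1$, and the $r$th run lies in runs $a_r,\dots,R'$. These windows have the properties you need:
\begin{enumerate}[(i)]
\item They are disjoint, and each begins and ends on an $x_i$-run.
\item Their lengths $\sigma_i=a_{i+1}-a_i$ for $i<r$ and $\sigma_r=R'-a_r+1$ are odd, because $a_i\equiv i \pmod 2$ and $R'\equiv r\pmod 2$ (as $w'$ has no NBRs).
\item $\sum_i\sigma_i=R'-a_1+1\le R'$.
\end{enumerate}
So every skeleton yields a tuple in $\mathscr{C}_{r,\mathrm{odd}}^{\le R'}$, and no charging argument is needed. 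Since the windows are disjoint and ordered, the number of occurrences with a given skeleton is exactly a product over $i$. The $i$th factor counts the $b_i$-subsets of the $N_i$ copies of $x_i$ in the $i$th window whose least element lies in run $a_i$. This is at most $v_{a_i}\binom{N_i-1}{b_i-1}\le R'_{\max}(N_i-b_i+1)^{b_i-1}$.

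\textbf{The degenerate case also disappears.} A skeleton with nonzero count has $N_i\ge b_i$, so the base satisfies $\frac{R'_{\max}}{2}(\sigma_i+1)-r_{\min}+1\ge N_i-b_i+1\ge 1$. Skeletons with zero count contribute nothing. An exchange move $(\sigma_{j_1},\sigma_{j_2})\mapsto(\sigma_{j_1}-2,\sigma_{j_2}+2)$ with $\sigma_{j_1}>\sigma_{j_2}+2$ never lowers the minimum entry, so every base stays at least $1$ along the way. Hence the argument of Lemma~\ref{maximizing_lemma} applies verbatim to the increasing affine base. The maximum over the skeleton-independent set $\mathscr{C}_{r,\mathrm{odd}}^{\le R'}$ is attained at $(\sigma_{r,R'}(1),\dots,\sigma_{r,R'}(r))$. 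Multiplying by your skeleton count from Step 1 then gives the theorem.

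\textbf{Comparison with the paper.} With this repair, your argument is the mirror image of the paper's. The paper indexes by the run containing the \emph{last} letter of each run of $p$, with windows from $g_{\mathbf{i}}(i-1)+1$ to $g_{\mathbf{i}}(i)$. It proves the exact formula of Proposition~\ref{combinatorial_formula_for_number_of_occurrences} and bounds each factor via Lemma~\ref{difference_bound}, whose conclusion is exactly your $v(N-b+1)^{b-1}$. Your route obtains that estimate in one line and counts skeletons by stars and bars rather than generating functions, so it is more elementary. The paper's route yields the exact formula as a by-product.
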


\begin{remark}\label{relaxed_upper_bound}
    A simpler version of Theorem~\ref{upper_bound_thm}, which is only slightly weaker (and maintains sharpness), may be obtained by replacing each $\sigma_{r,R'}(i)$ with $R'/r$. That is, using the same notation as in Theorem~\ref{upper_bound_thm},
    \begin{align*}
        c_p(w)\leq{\left\lfloor\frac{R'-r}{2}\right\rfloor+r\choose r}\left(R_{\max}'\left(\frac{R_{\max}'}{2}\left(\frac{R'}{r}+1\right)-r_{\min}+1\right)^{r_{\max}-1}\right)^r,
    \end{align*}
    with equality when $p$ and $w$ are alternating.
\end{remark}

We present this weakening of Theorem~\ref{upper_bound_thm} principally to make the bound's asymptotics conspicuous and simplify computations thereof. Indeed, unless $r$ and $R'$ are fixed, this weakening differs only by a constant factor. However, this constant can be large (see Example~\ref{explicit_computation_bound}), justifying the trouble of regarding the complicated $\sigma_{r,R'}(i)$ sequence.

Letting the lengths of $p$ and $w$ be $\ell$ and $n$ respectively, we remark that we have the \emph{length bound} $c_p(w)\leq{n\choose\ell}$, which follows immediately by the definition of an occurrence of $p$ in $w$. With this length bound noted, we naturally query how Theorem~\ref{upper_bound_thm} compares.

\begin{ex}\label{explicit_computation_bound}
    \normalfont Let $w$ be a binary word with $105$ runs, each of length $20$. Let $p$ be the binary word with $51$ runs of length $2$ beginning and ending with the same letters as $w$. Then the length bound gives $c_p(w)\leq {2100\choose 102}\approx6.3\cdot10^{175}$, whereas Theorem~\ref{upper_bound_thm} gives $c_p(w)\leq {78\choose27}780^{27}380^{24}\approx6.7\cdot10^{160}$. The true value of $c_p(w)$, computed by Sage \cite{sage}, is approximately $9.4\cdot10^{147}$. 
    
    Toward demonstrating how large the constant factor difference between Theorem~\ref{upper_bound_thm} and Remark~\ref{relaxed_upper_bound} may be, we see that for this case, the latter gives $c_p(w)\leq {78\choose 27}(10060/17)^{51}\approx 1.6\cdot10^{162}$, nearly $24$ times larger than as given by the former.
\end{ex}

Fixing the parameters used in our bound defines a family of pairs of words $p$ and $w$ over which Theorem~\ref{upper_bound_thm} affixes a single upper bound on $c_p(w)$. Since our bound depends only on the number of runs of $p$ and $w$ and their extremal lengths, we see that our bound is better when the runs of $p$ and $w$ have about the same lengths (so Example~\ref{explicit_computation_bound} is quintessential for our bound's optimality). Formally, our bound improves as the variances of the distributions of the run lengths of $p$ and $w$ decrease. This is shown asymptotically in Example~\ref{upper_bound_asymptotics}, where we fix $p$ for simplicity.
\begin{ex}\label{upper_bound_asymptotics}
    \normalfont{Let $p$ be a fixed alternating binary word of length $\ell$ and $w$ be of length $n$. For constants $a$ and $b$, let $R=a\sqrt{n}(1+o(1))$ and $R_{\max}=b\sqrt{n}(1+o(1))$ be the number of runs and the maximum run length of $w$, respectively. Then, letting $T(n)$ denote the length bound and $B(n)$ denote our bound (the strong and weak versions are equivalent in this case since $r_{\max}=1$), we have $(ab/2)^{-\ell}B(n)\sim T(n)$. This may be seen by a routine computation done in Appendix~\ref{appendix}. Thus, if the distribution of the run lengths of $w$ has minimal variance (implying that $ab=1$), then our bound is asymptotically better than the length bound by a factor of $2^{\ell}$.}
\end{ex}

Unfortunately, our bound is sometimes worse than the length bound, particularly when $r_{\max}$ and $R_{\max}$ are outliers in the distributions of the run lengths of $p$ and $w$, respectively. However, we remark that Theorem~\ref{upper_bound_thm} is far from optimized, and give several suggested directions for improvement in Subsection~\ref{improving_upper_bound}. We also expect an $m$-ary generalization of Theorem~\ref{upper_bound_thm}, which we discuss in Subsection~\ref{m_ary_bound}, to improve as $m$ becomes large.

\section{Proof of Theorem~\ref{polynomial_theorem}}\label{section_2}

Given a binary word $w$ with a fixed number of occurrences of $p$, one of the key observations in proving the first part of Theorem~\ref{polynomial_theorem} is that if a run of $w$ is too long, then that run cannot be used in an occurrence of $p$ (otherwise, there would be too many occurrences of $p$). The following definition precisely describes what it means for a run to be too long in this context, which we later substantiate to be the correct characterization.

\begin{definition}
    Let $k\in\ZZ^+$ and $p$ be a binary word. Set $r_{\max}$ to be the length of the longest run in $p$. A \textbf{short run} is a run of length less than $k+r_{\max}$, and a \textbf{long run} is a run of length at least $k+r_{\max}$.
\end{definition}

Since we are frequently discussing the enumeration of binary words of length $n$ with exactly $k$ occurrences of $p$ as denoted by $B_p(n,k)$, it is convenient to provide notation for the set of such binary words.

\begin{definition}
    Let $p$ be a binary word. We define $\mathcal{B}_{p}(n,k)$ to be the set of binary words of length $n$ with exactly $k$ occurrences of $p$, wherein we have $|\mathcal{B}_{p}(n,k)|=B_p(n,k)$.
\end{definition}

The cornerstone of the proof of the first part of Theorem~\ref{polynomial_theorem} lies in reducing every word in $\mathcal{B}_p(n,k)$ to a shorter word that retains the occurrence structure. These shorter words, which we call ``skeletons'' of all words with exactly $k$ occurrences of $p$, are much easier to handle. %From a high level, the proof of the first part of Theorem~\ref{polynomial_theorem} can be described as making a series of deductions on these skeletons, and then carefully translating the occurrence structure to the corresponding words in $\mathcal{B}_p(n,k)$, yielding an enumeration of $\mathcal{B}_p(n,k)$. 
The precise description of these skeletons is given in the following definition.

\begin{definition}\label{SW_def}
    Let $p$ be a binary word and $k\in\ZZ^+$. The set of \textbf{skeleton words} $\text{\normalfont SW}_p(k)$ is the set of words on $\{0,1,\hatted{0},\hatted{1}\}$ with exactly $k$ occurrences of $p$ such that
    \begin{enumerate}[(i)]
        \item\label{property_1} all runs of $0$'s and $1$'s are short,
        \item\label{property_2} all runs of $\hatted{0}$'s are of length $1$ and have only $1$'s as surrounding letters,
        \item\label{property_3} all runs of $\hatted{1}$'s are of length $1$ and have only $0$'s as surrounding letters, and
        \item\label{property_4} for $x\in\{0,1\}$, replacing $\hatted{x}$ with a run of $x$'s of arbitrary length does not cause the number of occurrences of $p$ to exceed $k$.
    \end{enumerate}

   % set of ternary words on the alphabet $\{0,1,m\}$ with exactly $k$ occurrences of $p$ such that all runs of $0$'s and $1$'s are short and all runs of $m$'s are of length at most $2\ell-1$. 
\end{definition}

To show that $B_p(n,k)$ is a polynomial of degree at most $\ell-r+1$, we prove that every word in $\mathcal{B}_p(n,k)$ may be obtained by replacing the $\hatted{0}$'s and $\hatted{1}$'s in a word in $\text{SW}_p(k)$ with long runs of $0$'s and $1$'s respectively. Subsequently, given a $w\in\text{SW}_p(k)$, we show that the number of words in $\mathcal{B}_p(n,k)$ that may be obtained by replacing the hatted letters in $w$ with the appropriate long runs is a polynomial in $n$ of degree at most $\ell-r+1$ for $n$ sufficiently large. We also prove $\text{SW}_p(k)$ is finite, which shows that summing these enumerations over all $w\in\text{SW}_p(k)$ yields a polynomial of degree at most $\ell-r+1$ equal to $B_p(n,k)$ for $n$ sufficiently large.

\begin{lemma}\label{short_words_constant_in_n}
    For any $k\in\ZZ^+$ and binary word $p$, the size of $\text{\normalfont SW}_p(k)$ is finite.
\end{lemma}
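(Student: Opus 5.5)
The plan is to bound both the length of every run and the number of runs of a skeleton word, since a word on a four-letter alphabet with boundedly many runs of bounded length can take only finitely many values. The run lengths are immediate from Definition~\ref{SW_def}. By property~(\ref{property_1}), every run of $0$'s or $1$'s has length less than $k+r_{\max}$. By properties~(\ref{property_2}) and~(\ref{property_3}), every run of hatted letters has length $1$. So the whole task is to show that the number of runs of a word $w\in\text{SW}_p(k)$ is bounded by a quantity depending only on $k$ and $p$.

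To do this, I first pass to an honest binary word. Let $\bar w$ be obtained from $w$ by replacing each $\hatted{x}$ with a single letter $x$, i.e.\ a run of $x$'s of length $1$. By property~(\ref{property_4}), $c_p(\bar w)\leq k$. Moreover, a $\hatted{0}$ is surrounded only by $1$'s and a $\hatted{1}$ only by $0$'s, so each replaced hatted letter forms its own run in $\bar w$ and no two runs merge. Hence $\bar w$ has exactly as many runs as $w$, say $R$.

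Next, choose one letter from each run of $\bar w$. This gives a subsequence $a$ of $\bar w$ which is an alternating binary word of length $R$. Since every occurrence of $p$ in $a$ lifts to a distinct occurrence of $p$ in $\bar w$, we get $c_p(a)\leq c_p(\bar w)\leq k$. It then remains to show that $c_p(a_R)\to\infty$ as $R\to\infty$, where $a_R$ ranges over the two alternating words of length $R$. For this, embed $p$ greedily. Each letter of $p$ needs at most two further letters of an alternating word, so after the first $2\ell$ letters of $a_R$ the first $\ell-1$ letters of $p$ are placed. The last letter $p_\ell$ can then be matched with any of the roughly $(R-2\ell)/2$ later copies of $p_\ell$. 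This gives at least $\lfloor (R-2\ell)/2\rfloor$ distinct occurrences. (When $p=x^\ell$ one even gets $\binom{\lceil R/2\rceil-1}{\ell}$ or more, but the linear bound suffices.) Therefore $R< 2\ell+2k+2$.

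Combining the two bounds, every $w\in\text{SW}_p(k)$ is a word on $\{0,1,\hatted{0},\hatted{1}\}$ with fewer than $2\ell+2k+2$ runs, each of length less than $k+r_{\max}$. There are only finitely many such words, so $\text{SW}_p(k)$ is finite. The step that needs the most care is the reduction to $\bar w$: one must check, using properties~(\ref{property_2})--(\ref{property_4}), that replacing hats by single letters neither merges runs nor raises the occurrence count above $k$. This relies on the paper's convention that a hatted letter is counted as its unhatted value when occurrences of $p$ are counted, and I would state this convention explicitly before the argument.
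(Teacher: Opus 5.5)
Your proof is correct, and it reaches finiteness by a somewhat different route from the paper's.

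\textbf{The paper's route.} It first bounds hat-free words. A binary word with at least $2\ell$ runs contains $p$, so cutting into disjoint blocks shows that a binary word with exactly $k$ occurrences has at most $2\ell(k+1)-1$ runs. This bounds the number of $0$'s and $1$'s in a skeleton word. The hatted letters are then handled separately, by counting the ways to insert them into the gaps of a hat-free word (at most three choices per gap). This gives the explicit bound $|\text{SW}_p(k)|<3\cdot 6^{(k+r_{\max}-1)(2\ell(k+1)-1)+1}$.

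\textbf{Your route.} You fold the hats into the run count instead. Replacing each hatted letter by a single unhatted letter merges no runs, by (ii)--(iii). By (iv) it gives a genuine binary word $\bar w$ with $c_p(\bar w)\le k$ and the same number of runs as $w$. So a single run bound controls the whole skeleton word at once. If you prefer to apply (iv) only to expansions into long runs, as the paper does in its later lemmas, note that $\bar w$ is a subsequence of any such expansion. Your run bound comes from an alternating subsequence and a greedy embedding that varies only the last letter of $p$. It is also sharper, $R<2\ell+2k+2$ rather than $2\ell(k+1)-1$, although either suffices for finiteness.

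\textbf{What your route buys.} It visibly uses property (iv). That property is what stops a skeleton word from carrying unboundedly many hatted letters when hatted letters are not matched against $p$. For example, with $p=01$ and $k=1$, the words $1\hatted{0}1\hatted{0}\cdots1\hatted{0}101$ satisfy (i)--(iii) and have exactly one occurrence under that reading. The paper's argument never invokes (iv); it implicitly treats the runs of $0$'s and $1$'s of a skeleton word as the runs of a binary word with exactly $k$ occurrences.

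\textbf{Your closing caveat.} For the same reason, the caveat about a counting convention is unnecessary. Property (iv), as the paper uses it, is a statement about the fully unhatted word. So $c_p(\bar w)\le k$ holds however hatted letters are counted in $w$. If they are counted as their unhatted values, you even get $c_p(\bar w)=k$ without appealing to (iv).
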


\begin{proof}
    Let $\ell$ be the length of $p$. Note that every binary word with at least $2\ell$ runs has at least one occurrence of $p$. Consequently, every binary word with at least $2\ell(k+1)$ runs has at least $k+1$ occurrences of $p$. So a binary word with exactly $k$ occurrences of $p$ has at most $2\ell(k+1)-1$ runs. As all runs of $0$'s and $1$'s in any word in $\text{SW}_{p}(k)$ are of length at most $k+r_{\max}-1$, the number of $0$'s and $1$'s in a word in $\text{SW}_{p}(k)$ is at most $(k+r_{\max}-1)(2\ell(k+1)-1)$. So the number of words consisting only of short runs of $0$'s and $1$'s with exactly $k$ occurrences of $p$ is less than $2^{(k+r_{\max}-1)(2\ell(k+1)-1)+1}$. Let $\mathcal{R}_p(k)$ be the set of such words.

    Note that every word in $\text{SW}_{p}(k)$ can be constructed by adding hatted letters to words in $\mathcal{R}_p(k)$. Let $w'\in\mathcal{R}_p(k)$. As no two hatted letters are adjacent, to construct a word in $\text{SW}_{p}(k)$ from $w'$, we have at most three choices between each space of $w'$ (including before the first letter of $w'$ and after the last letter of $w'$): insert a $\hatted{0}$, insert a $\hatted{1}$, or do not insert a hatted letter. Therefore, we have the bound
    \begin{align*}
        |\text{SW}_p(k)|\leq |\mathcal{R}_p(k)|\cdot 3^{\max_{w'\in \mathcal{R}_p(k)}(\text{length}(w'))+1}< 3\cdot6^{(k+r_{\max}-1)(2\ell(k+1)-1)+1},
    \end{align*}
    showing that $\text{\normalfont SW}_p(k)$ is finite.
    \end{proof}

We now prove every word in $\mathcal{B}_p(n,k)$ may be obtained by replacing the $\hatted{0}$'s and $\hatted{1}$'s in a word in $\text{SW}_p(k)$ with long runs of $0$'s and $1$'s respectively. To do this, we uniquely associate every word in $\mathcal{B}_{p}(n,k)$ to a word $\text{\normalfont SW}_p(k)$ by simply replacing all long runs of $0$'s with a single $\hatted{0}$ and all long runs of $1$'s with a single $\hatted{1}$.

\begin{lemma}\label{long_run_replacement_map}
    The procedure of replacing all long runs of $0$'s with a single $\hatted{0}$ and all long runs of $1$'s with a single $\hatted{1}$ is a map $f\colon\mathcal{B}_{p}(n,k)\to\text{\normalfont SW}_p(k)$.
\end{lemma}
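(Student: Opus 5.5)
The plan is to check directly that for every $w\in\mathcal{B}_p(n,k)$ the word $f(w)$ satisfies the defining conditions of $\text{SW}_p(k)$. Throughout, a letter $\hatted{x}$ is treated as a letter $x$ when counting occurrences of $p$. The whole argument rests on one observation: \emph{a long run of $w$ is not used in any occurrence of $p$ in $w$}.

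To prove this observation, suppose an occurrence $\mathbf{i}$ of $p$ uses a letter of a run $x^L$ of $w$ with $L\geq k+r_{\max}$. The indices of $\mathbf{i}$ lying in this run match a consecutive block of equal letters of $p$. That block lies inside a single run of $p$, so it has some length $j$ with $1\leq j\leq r_{\max}$. Keep all indices of $\mathbf{i}$ outside the run fixed, and re-choose the $j$ indices inside the run freely. This gives ${L\choose j}$ distinct occurrences of $p$. Since $1\leq j\leq r_{\max}\leq L-k$, we have ${L\choose j}\geq L\geq k+r_{\max}>k$. This contradicts $c_p(w)=k$.

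With the observation in hand, the properties follow in turn.

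\textbf{Exactly $k$ occurrences.} The unused long runs can be collapsed one at a time. Collapsing an unused run $x^L$ to a single $x$ keeps every occurrence not touching the run. It also creates no new ones: an occurrence using the single $x$ would lift to an occurrence using a letter of $x^L$ in $w$. Hence $f(w)$ still has exactly $k$ occurrences.

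\textbf{Properties (i)--(iii).} Property (i) holds because every remaining unhatted run is short. Each long run becomes a single hatted letter. Its neighbours in $w$ are runs of the opposite letter, which stay unhatted or become the opposite hatted letter. So each $\hatted{0}$ is surrounded only by (possibly hatted) $1$'s, and symmetrically for $\hatted{1}$. I read the surrounding-letter conditions in (ii) and (iii) in this sense, since two adjacent long runs of $w$ are permitted.

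\textbf{Property (iv).} I expect this to be the main obstacle, since it concerns replacing $\hatted{x}$ by a run of \emph{arbitrary} length $M$, not just the original $L$. I would use a monotonicity argument. Any occurrence that uses the run $x^M$ embeds into an occurrence using a run $x^{M'}$ for any $M'\geq M$: pick the matched positions inside the longer run. The occurrences avoiding the run are unaffected by its length. So the number of occurrences is nondecreasing in the run length.

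\begin{itemize}
\item If $M\geq k+r_{\max}$, the observation above shows the run is unused, so the count is exactly the count with the run removed or collapsed, namely $k$.
\item If $M<k+r_{\max}$, monotonicity bounds the count by the count at length $k+r_{\max}$, which is $k$.
\end{itemize}

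Care is needed when several hatted letters are replaced at once. I would handle this by replacing them one by one and applying the same argument at each step. At each step the intermediate word has at most $k$ occurrences, and the observation only needs ``at most $k$'' rather than ``exactly $k$''.
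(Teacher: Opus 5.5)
Your central observation, that a used long run would yield ${L\choose j}\geq L>k$ occurrences, is correct and is the same idea the paper uses. However, two steps built on it do not go through.

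\textbf{Properties (ii)--(iii).} Your assertion that two adjacent long runs of $W$ are permitted is false. The weakened reading you adopt therefore proves less than the definition requires. In $\text{SW}_p(k)$ a $\hatted{0}$ must be flanked by the letter $1$, not by $\hatted{1}$, and the paper relies on this; for example, the proof of Lemma~\ref{short_words_constant_in_n} uses that no two hatted letters are adjacent. The missing idea, which is exactly the paper's argument, is that when $k\geq1$, at least one of any two consecutive runs of $W$ is used in some occurrence of $p$. To see this, take any occurrence $\mathbf{i}$ that does not touch the two runs.
\begin{enumerate}[(a)]
\item If all its indices precede the pair, move $i_\ell$ into whichever of the two runs has value $p_\ell$.
\item If all its indices follow the pair, move $i_1$ likewise.
\item Otherwise some $i_j$ precedes and $i_{j+1}$ follows the pair, and $i_j$ can be moved into the run of value $p_j$.
\end{enumerate}
Since long runs are unused, no two of them are adjacent, so every hatted letter of $f(W)$ has only unhatted neighbours.

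\textbf{Property (iv).} Your first bullet is circular. The observation was derived from the word in question having exactly $k$ occurrences. But for $W_M$ (the word $W$ with that long run replaced by $x^M$), the bound $c_p(W_M)\leq k$ is precisely what is to be shown. Your monotonicity also runs the wrong way once $M$ exceeds the original length $L$.

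The step needed is compression rather than expansion. An occurrence in $W_M$ that uses the new run uses some $j\leq r_{\max}$ of its letters, forming a block inside one run of $p$. Since $L\geq k+r_{\max}>j$, those $j$ letters can be relocated into the original run $x^L$ of $W$. This produces an occurrence of $p$ in $W$ that uses a long run, contradicting your observation applied to $W$ itself. Hence the new run is never used. The occurrences avoiding it are the same as in $W$, so $c_p(W_M)=k$ for every $M\geq1$. Applying this compression to all hatted letters simultaneously also makes your one-at-a-time induction unnecessary. The paper states (iv) tersely as a consequence of long runs being unused, and this compression is the step left implicit there.
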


\begin{proof}
    Let $W\in\mathcal{B}_p(n,k)$. If a long run of length $h$ in $W$ is used in an occurrence of $p$, then $W$ has at least 
    \begin{align*}
        \min_{\text{runs~}x^L\text{~of~}p}{k+r_{\max}\choose L}\geq k+1
    \end{align*}
    occurrences of $p$. So $c_p(f(W))=k$ and $f(W)$ satisfies property (\ref{property_4}) of Definition~\ref{SW_def}. By construction of $f$, property (\ref{property_1}) of Definition~\ref{SW_def} is satisfied by $f(W)$. Noting that at least one of any two consecutive runs in $W$ must be used in an occurrence of $p$, it follows that $W$ cannot have consecutive long runs. Thus, $f(W)$ satisfies properties (\ref{property_2}) and (\ref{property_3}) in Definition~\ref{SW_def}. Therefore $f(W)\in \text{\normalfont SW}_p(k)$.
\end{proof}

\begin{ex}
    \normalfont Let $p=001$ and $W=01111110101$. So $W\in\mathcal{B}_p(11,4)$. Since the length of $111111$ is $6\geq 4+2=k+r_{\max}$, we have $f(W)=0\hatted{1}0101\in \text{\normalfont SW}_p(4)$.
\end{ex}

\phantomsection\label{setting_star}
The next few lemmas and the proof of Theorem~\ref{polynomial_theorem} use the following notation, which we call ``setting \settingstar''. Let $k\in\ZZ^+$ and $p$ be a binary word of length $\ell$ with $r\geq2$ runs. Fix $x^L$ to be a run of length $L$ in $p$. Let $y=1-x$ and $u,u'$ be binary words (possibly empty) such that $p=ux^L u'$. Let $w\in\text{SW}_p(k)$. If $u,u'$ are nonempty, let $\mathbf{i}=i_1<\cdots<i_{c}$ be the first occurrence of $u$ in $w$ and $\mathbf{j}=j_1<\cdots<j_{d}$ be the last occurrence of $u'$ in $w$. If $u$ is empty, set $i_{c}=0$, and if $u'$ is empty, set $j_1=\text{length}(w)+1$. Lastly, define $Q_1(x^L)=Q_1=w_{1}\cdots w_{i_c}$ and $V(x^L)=V=w_{i_{c}+1}\cdots w_{j_1-1}$ and $Q_2(x^L)=Q_2=w_{j_1}\cdots w_{\text{length}(w)}$ so that we have the decomposition $w=Q_1VQ_2$.

\begin{ex}
    \normalfont Let $k=12$ and $p=1100111$ and $x^L$ be the run $00$. Then $y=1$ and $u=11$ and $u'=111$. Let $w=100110\hatted{1}0110011\in\text{\normalfont SW}_p(12)$. Then $\mathbf{i}$ is the occurrence described by the underlined letters and $\mathbf{j}$ is the occurrence described by the overlined letters in $\underline{1}00\underline{1}10\hatted{1}01\overline{1}00\overline{11}$. So $Q_1=1001$ and $V=10\hatted{1}01$ and $Q_2=10011$.
\end{ex}

The remaining lemmas in this section are used to show that there are at most $L-1$ hatted letters in $V(x^L)$ if $x^L$ is not at the boundary, at most $L$ hatted letters in $V(x^L)$ if $x^L$ is at the boundary, and that each hatted letter lies in exactly one $V(t^l)$ over all runs $t^l$ of $p$. We then sum over all the runs of $p$ to show that the total number of hatted letters in $w$ is at most $\ell-r+2$, from which we describe a balls-and-bins interpretation \cite[Section 1.9]{stanley_book} to prove that the number of words in $\mathcal{B}_p(n,k)$ that may be obtained by replacing the $\hatted{0}$'s and $\hatted{1}$'s in $w$ with long runs of $0$'s and $1$'s is a fixed polynomial of degree at most $\ell-r+1$ for $n$ sufficiently large.

%\begin{ex}\label{ex_for_L_minus_1}
  %  Let $k=126$ and $p=11111000001111$ and $x^L$ be the run $00000$. Then $y=1$ and $u=11111$ and $u'=1111$. Let $w=1001\hatted{0}11\hatted{0}11000\hatted{1}0\hatted{1}000111\hatted{0}1\hatted{0}\in\text{\normalfont SW}_p(126)$. Then $Q_1=1001\hatted{0}11\hatted{0}1$ and $V=1000\hatted{1}0\hatted{1}000$ and $Q_2=111\hatted{0}1\hatted{0}$. We now show that having a $\hatted{0}$ between $Q_1$ and $Q_2$ is impossible, exemplifying Lemma~\ref{at_most_L_minus_1}.

 %   If there was a $\hatted{0}$ between $Q_1$ and $Q_2$, then the word $1001111~0\cdots0~1111$ is necessarily a subsequence of some word obtained by expanding all hatted letters of $w$ into long runs, where $0\cdots0$ denotes the long run obtained from expanding the $\hatted{0}$ between $Q_1$ and $Q_2$. There are at least ${126+5\choose 5}>126$ occurrences of $00000$ in the long run, which implies that any word obtained by expanding all hatted letters of $w$ into long runs has more than $k$ occurrences of $p$, contradicting property (\ref{property_4}) of Definition~\ref{SW_def}. In fact, if we additionally regard the seven preexisting $0$'s in $w$, we can improve the lower bound on the occurrences of $00000$ in a word obtained by expanding all hatted letters of $w$ into long runs to ${7+126+5\choose 5}$.
%\end{ex}

\begin{lemma}\label{at_most_L_minus_1}
    Let notation be as in setting \settingstar. Then there are no $\hatted{x}$'s between $Q_1$ and $Q_2$, and the number of $\hatted{y}$'s between the first and last $x$ in $V$ is at most $L-1$.
\end{lemma}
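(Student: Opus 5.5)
Both parts go by contradiction against property (\ref{property_4}) of Definition~\ref{SW_def}. In each case we expand hatted letters of $w$ into long runs of length $N$ (with $N$ as large as we like) and exhibit more than $k$ occurrences of $p$. The common device is that any occurrence of $x^L$ lying strictly between $i_c$ and $j_1$ extends to an occurrence of $p$. We prepend the first occurrence $\mathbf{i}$ of $u$, which ends at $i_c$, and append the last occurrence $\mathbf{j}$ of $u'$, which starts at $j_1$. When $u$ or $u'$ is empty, the conventions $i_c=0$ and $j_1=\text{length}(w)+1$ make this automatic. So $c_p(W)\geq c_{x^L}(V')$, where $W$ is the expansion of $w$ and $V'$ is the expansion of $V$.

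For the first claim, suppose some $\hatted{x}$ lies in $V$. Expanding it to a run of $N$ copies of $x$ gives at least $\binom{N}{L}$ occurrences of $x^L$ inside $V'$. Each of these yields an occurrence of $p$ in $W$, and distinct choices of the $L$ indices give distinct occurrences. Taking $N$ large, $\binom{N}{L}>k$, which contradicts property (\ref{property_4}). Hatted letters are never indices of $\mathbf{i}$ or $\mathbf{j}$, because $w$ itself has exactly $k$ occurrences and the hatted letters are placeholders. If that convention turns out to be in doubt, I will check it directly from how $f$ was defined.

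For the second claim, suppose there are at least $L$ letters $\hatted{y}$ strictly between the first $x$ and the last $x$ in $V$. By properties (\ref{property_2}) and (\ref{property_3}), each $\hatted{y}$ is a single letter surrounded by $x$'s. So the $\hatted{y}$'s and the $x$-runs of $V$ that separate them alternate: starting from the first $x$ of $V$, we pass through at least $L+1$ distinct maximal runs of $x$'s, with consecutive runs separated by a $\hatted{y}$. The plan is to exploit this structure. Instead of the $x$'s alone, I will count occurrences of $p$ in $W$ whose $x^L$ block is still taken from $V$'s $x$'s, with the $x$'s drawn from different runs. That alone gives only a bounded count, so the growth has to come from the $\hatted{y}$'s.

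I expect this last point to be the main obstacle, since the long $y$-runs seem at first not to contribute to $x^L$. The fix I will try first is to reassign where $u$ ends and where $u'$ starts. Since $r\geq 2$, one of $u,u'$ is nonempty and its run adjacent to $x^L$ consists of $y$'s. Say $u$ ends in $y$. I will use an $x$ of $V$ positioned after the first $x$ of $V$, followed by one expanded $\hatted{y}$ run supplying the final $y$ of $u$. Then I take $L$ $x$'s from the $x$-runs after that $\hatted{y}$, and these must be available. Choosing the final $y$ of $u$ freely among the $N$ letters of that expanded run, while keeping an occurrence of $u$ before it, gives at least $N$ distinct occurrences of $p$. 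That exceeds $k$ for large $N$.

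The counting needs, after the chosen $\hatted{y}$, enough $x$'s before $j_1$ to form $x^L$. It also needs enough letters before that $\hatted{y}$ to complete $u$. Both are where the hypothesis of at least $L$ copies of $\hatted{y}$, equivalently at least $L+1$ separated $x$-runs, gets used. I would pick the $\hatted{y}$ so that at least $L$ $x$-runs follow it within $V$. The prefix of $u$ preceding its last $y$ is matched by $\mathbf{i}$ shifted appropriately, using that $\mathbf{i}$ is the first occurrence, so $u$ minus its final run still fits before $i_c$. The case where $u'$ begins with $y$ is symmetric, using $\mathbf{j}$ as the last occurrence. Carefully verifying that these $N$ occurrences are distinct and genuinely fit, especially when $u$'s last $y$-run has length greater than one, is the step I expect to require the most care.
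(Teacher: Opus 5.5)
Your argument is correct and is essentially the paper's: part one is the same $\binom{\cdot}{L}$ count from an expanded $\hatted{x}$ inside $V$, and part two uses the same construction, in which an expanded $\hatted{y}$ supplies the $y$ of $u$ (or of $u'$) adjacent to $x^L$ while $L$ of the $x$'s of $V$ lie on the other side of it. You argue part two by contrapositive using the leftmost (or rightmost) $\hatted{y}$ and only the nonempty side, which is slightly more economical than the paper's two-sided localization and handles the case where $u$ or $u'$ is empty cleanly; the step you flag is immediate, since keeping $i_1<\cdots<i_{c-1}$ and replacing $i_c$ by any of the $N$ letters of the expanded run gives $N$ distinct occurrences, whatever the length of the last run of $u$.
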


\begin{proof}
    Let $\Lambda$ be the number of $x$'s in $V$. As $\mathbf{i}$ is the first occurrence of $u$ in $w$, $\mathbf{j}$ is the last occurrence of $u'$, and $k\geq1$, we know that $x^L$ occurs at least once in $V$. So $\Lambda\geq L$. Let $W$ be a word obtained by expanding the hatted letters of $w$ into long runs. For the sake of contradiction, suppose that there was an $\hatted{x}$ between $Q_1$ and $Q_2$. An occurrence of $p$ in $W$ may be formed by combining the first occurrence of $u$; an occurrence of $x^L$ after the first occurrence of $u$ and before the last occurrence of $u'$; and the last occurrence of $u'$. As the number of $x$'s in $W$ after the first occurrence of $u$ and before the last occurrence of $u'$ is at least $\Lambda+k+r_{\max}$ (by the existence of the $\hatted{x}$), we have that $W$ has at least ${\Lambda+k+r_{\max}\choose L}\geq{L+k+r_{\max}\choose L}>k$ occurrences of $p$, contradicting property (\ref{property_4}) of Definition~\ref{SW_def}. Thus, there are no $\hatted{x}$'s between $Q_1$ and $Q_2$.

    For the sake of contradiction, suppose there was an $\hatted{y}$ between the $(L+\lambda)$\textsuperscript{th} and $(L+\lambda+1)$\textsuperscript{th} $x$ in $V$ for some $\lambda\geq0$. Let $y^h$ be the long run in $W$ formed by expanding this $\hatted{y}$ and $\mathbf{j}'$ be the last occurrence of $u_2'\cdots u'_{\text{length}(u')}$ in $W$. An occurrence of $p$ in $W$ may be formed by combining the first occurrence of $u$, the first $L$ instances of $x$ after the first occurrence of $u$, a single $y$ from $y^h$ (note that $u_1'=y$), and finally $\mathbf{j}'$. Since the number of $y$'s to choose from in $y^h$ is at least $k+r_{\max}>k$, the number of occurrences of $p$ in $W$ is greater than $k$, contradicting property (\ref{property_4}) of Definition~\ref{SW_def}. By symmetry, there also cannot be an $\hatted{y}$ between the $(\Lambda-L-\lambda)$\textsuperscript{th} and $(\Lambda-L+1-\lambda)$\textsuperscript{th} $x$ in $V$ for any $\lambda\geq0$. As all runs of $\hatted{y}$'s are of length $1$ and have only $x$'s as surrounding letters, at most one $\hatted{y}$ may lie between the $s$\textsuperscript{th} and $(s+1)$\textsuperscript{th} $x$ in $V$. Furthermore, by the above, for a $\hatted{y}$ to exist between the $s$\textsuperscript{th} and $(s+1)$\textsuperscript{th} $x$ in $V$, it is necessary that $1\leq s\leq L-1$ and $\Lambda-L+1\leq s\leq \Lambda-1$. So the number of $\hatted{y}$'s between the first and last $x$ in $V$ is at most $\max((L-1)-(\Lambda-L+1)+1,0)=\max(2L-\Lambda-1,0)\leq L-1$.
\end{proof}

  %  \textcolor{red}{change this out with a figure}
   % Suppose that there was a $\hatted{1}$ between the second and third zero in $V$. Then the word $1001111100~1\cdots1~000001111$ is necessarily a subsequence of some word obtained by expanding all hatted letters of $w$ into long runs, where $1\cdots1$ denotes the long run obtained from expanding the $\hatted{1}$ between the second and third zero in $V$. Then we have the occurrences $\underline{1}00\underline{111}1100~\underline{1\cdots1}~\underline{000001111}$, where underlining the long run means we take one letter from the long run. Since there are at least $126+5>126$ letters in the long run, we contradict property (\ref{property_4}) of Definition~\ref{SW_def}.

Under the assumption that $u,u'$ are nonempty (i.e., when $x^L$ is not at the boundary), we now establish that there may be no $\hatted{y}$'s between $Q_1$ and $Q_2$ but not between two $x$'s in $V$.

\begin{lemma}\label{nonempty_no_hatted_ys}
     Let notation be as in setting \settingstar. If $u,u'$ are nonempty, then there are no $\hatted{y}$'s between $Q_1$ and the first $x$ in $V$, nor between the last $x$ in $V$ and $Q_2$.
\end{lemma}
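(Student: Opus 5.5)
We argue by contradiction, following the second half of the proof of Lemma~\ref{at_most_L_minus_1}. The key structural fact is that $x^L$ is a \emph{run} of $p$ and $u,u'$ are nonempty, so the last letter of $u$ and the first letter of $u'$ both equal $y$. If a $\hatted{y}$ lies in one of the two forbidden regions, then, once it is expanded into a long run, it can serve as that boundary letter $y$ in at least $k+r_{\max}>k$ distinct ways. This contradicts property~(\ref{property_4}) of Definition~\ref{SW_def}.

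\emph{Case 1: a $\hatted{y}$ between $Q_1$ and the first $x$ in $V$.} Let $W$ be obtained from $w$ by expanding the hatted letters into long runs, and let $y^h$, with $h\geq k+r_{\max}$, be the run coming from this $\hatted{y}$. As in the proof of Lemma~\ref{at_most_L_minus_1}, $V$ contains $\Lambda\geq L$ letters $x$, because $k\geq1$ and $\mathbf{i},\mathbf{j}$ are the first and last occurrences. By assumption all of these $x$'s lie after $y^h$.

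We then build occurrences of $p$ in $W$ from four pieces, in order:
\begin{enumerate}[(a)]
\item the letters at positions $i_1<\cdots<i_{c-1}$, which spell $u_1\cdots u_{c-1}$;
\item any single $y$ of $y^h$, standing in for $u_c=y$; this is legitimate since $y^h$ lies after $i_c>i_{c-1}$;
\item the first $L$ letters $x$ of $V$, which come after $y^h$;
\item the last occurrence $\mathbf{j}$ of $u'$, which lies in $Q_2$ and hence after $V$.
\end{enumerate}
The choice in (b) gives at least $h\geq k+r_{\max}>k$ distinct occurrences of $p$ in $W$. This contradicts property~(\ref{property_4}).

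\emph{Case 2: a $\hatted{y}$ between the last $x$ in $V$ and $Q_2$.} This is the mirror image. We take the first occurrence $\mathbf{i}$ of $u$, then the last $L$ letters $x$ of $V$, all of which precede the run $y^h$. Next we take any $y$ from $y^h$ as $u'_1=y$. Finally we take $j_2<\cdots<j_d$ for $u'_2\cdots u'_d$; these positions lie after $j_1$, hence after $y^h$, because $y^h$ sits in $V$. Again there are more than $k$ occurrences, a contradiction. We could equally say ``by symmetry'', reversing both $p$ and $w$.

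The only delicate point is checking that the positions in each construction are genuinely increasing. This is where nonemptiness of $u$ and $u'$ is used, since it lets us borrow the boundary $y$ from $u$ or $u'$. We also rely on the fact that $V$ lies strictly between $i_c$ and $j_1$, so that the substituted $y$ fits in order. Everything else is the same counting used in Lemma~\ref{at_most_L_minus_1}.
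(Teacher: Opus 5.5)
Your proof is correct, but it reaches the contradiction by a different mechanism than the paper. Both arguments start from the same observation: maximality of the run $x^L$ forces $u$ to end with $y$ and $u'$ to begin with $y$, so $w_{i_c}=w_{j_1}=y$.

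The paper then finishes purely locally, using properties (ii) and (iii) of Definition~\ref{SW_def}. A $\hatted{y}$ must have plain $x$'s on both sides. But a $\hatted{y}$ lying after $Q_1$ and before the first $x$ of $V$ has as its left neighbor either $w_{i_c}=y$ or a letter of $V$ preceding that first $x$, and neither is an $x$. The other side is symmetric, using $w_{j_1}=y$. No expansion or counting is involved.

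You instead contradict property (iv). You expand the $\hatted{y}$ into a long run and let each of its at least $k+r_{\max}$ letters play the role of $u_c$ (resp.\ $u'_1$). Your ordering checks are right, and the degenerate cases $c=1$ or $d=1$ cause no trouble.

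The paper's route is shorter but relies on the adjacency conventions built into skeleton words. Yours uses only the occurrence-count property, so it is uniform with the proof of Lemma~\ref{at_most_L_minus_1}. It would also survive a weakening of (ii) and (iii), which could matter for the refined definition needed in the $m$-ary setting of Subsection~\ref{polynomial_m_ary_section}.
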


\begin{proof}
    Note that $u$ ending with an $x$ would contradict the maximality of the run $x^L$ in $p$. So $u$ must end with a $y$. By symmetry, $u'$ must also begin with a $y$. There cannot be a $\hatted{y}$ between $Q_1$ and the first $x$ in $V$, otherwise we would contradict either property (\ref{property_2}) or property (\ref{property_3}) of Definition~\ref{SW_def}. By the same reasoning, there also cannot be a $\hatted{y}$ between the last $x$ in $V$ and $Q_2$.
\end{proof}

%Note that $u$ ending with an $x$ would contradict the maximality of the run $x^L$ in $p$. So $u$ must end with a $y$. By symmetry, $u'$ must also begin with a $y$. For the sake of contradiction suppose there was a $\hatted{y}$ between $w_{i_c}$ and $w_{i_c+1}$. If we replace this $\hatted{y}$ between $w_{i_{c}}$ and $w_{i_{c}+1}$ with a long run of $y$'s, then $w_{i_1},\dots,w_{i_{c-1}}$ followed by a $y$ from the long run replacing this $\hatted{y}$, an occurrence of $x^L$ in $V$, and finally by the last occurrence of $u'$ yields an occurrence of $p$. Since the number of $y$'s to choose from in the long run is at least $k+r_{\max}$, the number of occurrences of $p$ in any word obtained by expanding this $\hatted{y}$ into a long run of $y$'s is greater than $k$, contradicting property (iv) of Definition~\ref{SW_def}. Thus, there are no $\hatted{y}$'s between $w_{i_{c}}$ and $w_{i_{c}+1}$. By symmetry, there are also no $\hatted{y}$'s between $w_{j_1-1}$ and $w_{j_1}$.

We now consider the cases where exactly one of $u,u'$ is empty, establishing that these boundary cases permit at most one more $\hatted{y}$ between $Q_1$ and $Q_2$ than the case where $u,u'$ are nonempty.

\begin{lemma}\label{empty_cases}
     Let notation be as in setting \settingstar. If $x^L$ is the first run of $p$, then there is at most one $\hatted{y}$ before $V$ and no $\hatted{y}$'s between the last $x$ in $V$ and $Q_2$. If $x^L$ is the last run of $p$, then there is at most one $\hatted{y}$ after $V$ and no $\hatted{y}$'s between $Q_1$ and the first $x$ in $V$.
\end{lemma}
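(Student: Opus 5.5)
By the symmetry of reversing both $p$ and $w$ (which swaps first and last occurrences, $Q_1$ and $Q_2$, and the roles of $u$ and $u'$), it suffices to treat the case where $x^L$ is the first run of $p$. Then $u$ is empty, so $i_c=0$, $Q_1$ is empty, and $V=w_1\cdots w_{j_1-1}$. Also $u'$ is nonempty because $r\geq 2$, and $u'_1=y$ by maximality of the run $x^L$.

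\textbf{No $\hatted{y}$ between the last $x$ of $V$ and $Q_2$.} I would argue as in Lemma~\ref{nonempty_no_hatted_ys}. The letter $w_{j_1}$ is the first letter of the last occurrence $\mathbf{j}$ of $u'$, so $w_{j_1}=y$. A $\hatted{y}$ lying after the last $x$ of $V$ and before $Q_2$ would be adjacent to a $y$: either $w_{j_1}$ itself, or an unhatted $y$ between the last $x$ and $w_{j_1}$. This contradicts property (\ref{property_2}) or (\ref{property_3}) of Definition~\ref{SW_def}. If instead the segment after the last $x$ of $V$ is empty, there is nothing to show.

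\textbf{At most one $\hatted{y}$ before the first $x$ of $V$.} Here "before $V$" means the portion of $V$ preceding its first $x$, because $Q_1$ is empty. By Lemma~\ref{at_most_L_minus_1}, $V$ contains no $\hatted{x}$. Every hatted $\hatted{y}$ is a run of length $1$ whose neighbours are $x$'s. A $\hatted{y}$ preceding the first $x$ of $V$ therefore cannot have an $x$ (hatted or not) on its left, so it must be $w_1$. Its right neighbour must also be an $x$. Hence at most one $\hatted{y}$ occurs there, namely $w_1=\hatted{y}$ followed immediately by an $x$. In this situation that $\hatted{y}$ is simply an NBR-type run and does not affect occurrences, consistent with property (\ref{property_4}).

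\textbf{Main obstacle.} The delicate step is interpreting "before $V$" correctly and checking that no unhatted letters can separate two candidate $\hatted{y}$'s. Two $\hatted{y}$'s would need an $x$ between them, and any such $x$ would itself be the first $x$ of $V$. So the adjacency constraints of Definition~\ref{SW_def} alone force the count to be at most one, and no occurrence-counting argument is needed.
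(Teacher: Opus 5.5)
Your proof is correct and follows the paper's argument: reduce to the first-run case by symmetry, then use the adjacency conditions (ii)--(iii), with no $\hatted{x}$ in $V$ by Lemma~\ref{at_most_L_minus_1}, to see that only $y$'s or a single $\hatted{y}$ can precede the first $x$ of $V$ when $Q_1$ is empty. The segment after the last $x$ of $V$ is handled by the same adjacency contradiction as in Lemma~\ref{nonempty_no_hatted_ys}, since $u'$ is nonempty and begins with $y$; your reading of ``before $V$'' as the part of $V$ before its first $x$ is the one the paper relies on when it later counts at most $L$ hatted letters in $V(x^L)$ for a boundary run.
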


\begin{proof}
    Since $r\geq2$, it follows that $x^L$ cannot be the first and last run of $p$. Suppose that $x^L$ is the first run of $p$. Then there are only $y$'s or a single $\hatted{y}$ before $w_{i_c+1}=w_1$. As $u$ is empty, $u'$ is nonempty. So if there were a $\hatted{y}$ between the last $x$ in $V$ and $Q_2$, we would obtain the same contradiction as in Lemma~\ref{nonempty_no_hatted_ys}. By symmetry, the case where $x^L$ is the last run of $p$ follows.
    %Suppose that $u'$ is empty. So $x^L$ is the last run of $p$, implying that there are only $y$'s or a single $\hatted{y}$ after $w_{j_1-1}=w_{\text{length}(w)}$. As $u'$ is empty, $u$ is nonempty. So if there were a $\hatted{y}$ between $Q_1$ and the first $x$ in $V$, we would obtain the same contradiction as in Lemma~\ref{nonempty_no_hatted_ys}.
\end{proof}

To bound the total number of hatted letters in $w$, we show that every hatted letter in $w$ lies in exactly one $V(t^l)$. To prove this fact, we use the following notation: given two words $\nu,\nu'$, we write that $\nu'\subseteq \nu$ if $\nu'=\nu_{h_1}\cdots\nu_{h_2}$ for some $1\leq h_1\leq h_2\leq\text{length}(\nu)$.

\begin{lemma}\label{exactly_one_Q_1_Q_2_pair}
    Let notation be as in setting \settingstar. Then every hatted letter in $w$ lies in exactly one word in $\{V(t^l): t^l\text{\normalfont~is~a~run~of~}p\}$.
\end{lemma}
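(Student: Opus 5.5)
The plan is to index the runs of $p$ as $t_1^{l_1},\dots,t_r^{l_r}$ and compare the intervals of positions occupied by the words $V(t_m^{l_m})$ inside $w$. For each $m$, write $a_m$ for the index $i_c$ and $b_m$ for the index $j_1$ from setting \settingstar{} applied to the run $t_m^{l_m}$. Thus $a_m$ is the last index of the first occurrence of $u=t_1^{l_1}\cdots t_{m-1}^{l_{m-1}}$, with $a_1=0$. Likewise $b_m$ is the first index of the last occurrence of $u'=t_{m+1}^{l_{m+1}}\cdots t_r^{l_r}$, with $b_r=\text{length}(w)+1$. Then $V(t_m^{l_m})$ occupies exactly the positions $s$ with $a_m<s<b_m$. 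I will prove that every position of $w$ lies in at least one such interval, and that a position lying in two of them cannot carry a hatted letter.

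The first step is monotonicity: $a_1<a_2<\cdots<a_r$ and $b_1<b_2<\cdots<b_r$. For the $a_m$, the first (greedy) occurrence of a longer prefix of $p$ extends the first occurrence of a shorter prefix, so it ends strictly later. The statement for the $b_m$ is symmetric, using last occurrences of suffixes. The second step is the inequality $a_{m+1}<b_m$. Since $k\geq1$, $w$ contains an occurrence of $p$. Its part matching $t_1^{l_1}\cdots t_m^{l_m}$ ends no earlier than $a_{m+1}$, and its part matching $t_{m+1}^{l_{m+1}}\cdots t_r^{l_r}$ starts no later than $b_m$; the former precedes the latter.

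For coverage, take a position $s$ and let $m$ be maximal with $a_m<s$; this $m$ exists because $a_1=0$. If $m=r$, then $s<b_r$ trivially. Otherwise $s\leq a_{m+1}<b_m$. Either way $s$ lies in $V(t_m^{l_m})$.

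For uniqueness, suppose a hatted letter at position $s$ lies in both $V(t_m^{l_m})$ and $V(t_{m'}^{l_{m'}})$ with $m<m'$. By monotonicity, $a_{m+1}\leq a_{m'}<s<b_m\leq b_{m+1}$, so $s$ lies in both $V(t_m^{l_m})$ and $V(t_{m+1}^{l_{m+1}})$. Consecutive runs of $p$ have different letters and the alphabet is binary, so the hatted letter at $s$ equals $\hatted{t_m}$ or $\hatted{t_{m+1}}$. In the first case, apply Lemma~\ref{at_most_L_minus_1} with $x^L=t_m^{l_m}$: there are no $\hatted{x}$'s between $Q_1$ and $Q_2$, that is, none in $V(t_m^{l_m})$, a contradiction. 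The second case is the same argument applied to the run $t_{m+1}^{l_{m+1}}$.

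I expect the only delicate step to be justifying the monotonicity and the inequality $a_{m+1}<b_m$ carefully, including the boundary conventions $a_1=0$ and $b_r=\text{length}(w)+1$. Once those are in place, exclusivity follows directly from the first part of Lemma~\ref{at_most_L_minus_1}.
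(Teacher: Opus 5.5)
Your proof is correct, and its existence half takes a genuinely different route from the paper's.

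The paper argues one hatted letter at a time. A $\hatted{a}$ at the start or end of $w$ is placed in $V$ of the first or last run of $p$. For an interior $\hatted{a}$, the paper uses properties (\ref{property_2})--(\ref{property_4}) of Definition~\ref{SW_def} to produce an occurrence $\mathbf{g}$ of $p$ in which the two letters $b=1-a$ flanking $\hatted{a}$ are consecutive entries $w_{g_e},w_{g_{e+1}}$ of a single run $b^L$ of $p$. It then reads off that $\hatted{a}$ lies in $V(b^L)$.

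You instead show that the intervals $(a_m,b_m)$ cover every position of $w$. For this you use only the monotonicity of the greedy first and last occurrences and the inequality $a_{m+1}<b_m$. The componentwise extremality of greedy occurrences, which you flag, is the standard induction. This covering step has several advantages:
\begin{enumerate}[(i)]
\item it needs nothing beyond $c_p(w)\geq1$ and uses none of properties (\ref{property_2})--(\ref{property_4});
\item it requires no case split;
\item it avoids the paper's most delicate point, that a single occurrence passes through both neighbours of $\hatted{a}$. The paper's appeal to ``at least one of any two consecutive runs is used'' needs an extra exchange argument when the two neighbours are used by different occurrences.
\end{enumerate}
In exchange, the paper's route shows directly that the relevant run of $p$ has letter $1-a$. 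In your argument this is recovered afterwards from Lemma~\ref{at_most_L_minus_1}.

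For uniqueness the two proofs agree in substance. Both rest on the nesting of the $Q_1$'s and $Q_2$'s and on the first part of Lemma~\ref{at_most_L_minus_1}. Your reduction to the adjacent pair $t_m^{l_m},t_{m+1}^{l_{m+1}}$, whose letters automatically differ, is a slightly tidier form of the paper's split into $x_1\neq x_2$ versus $x_1=x_2$ with an intermediate run $y^u$.
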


\begin{proof}
    Let $\hatted{a}$ be a hatted letter of $w$. If this $\hatted{a}$ is at the beginning of $w$, then this $\hatted{a}$ lies only in $V(\beta_1^{d_1})$, where $\beta_1^{d_1}$ is the first run of $p$. If this $\hatted{a}$ is at the end of $w$, then this $\hatted{a}$ lies only in $V(\beta_2^{d_2})$, where $\beta_2^{d_2}$ is the last run of $p$.
    
    Thus, assume that this $\hatted{a}$ is in the middle of $w$. Let $b=1-a$. Then by properties (\ref{property_2}) and (\ref{property_3}) of Definition~\ref{SW_def}, this $\hatted{a}$ must be surrounded only by $b$'s. For the sake of contradiction, suppose that there is no occurrence of $p$ using both of the surrounding $b$'s. Let $W$ be a word obtained by expanding the hatted letters of $w$ into long runs. Then, since at least one of any two consecutive runs must be used in an occurrence of $p$, this long run of $a$'s must be used in an occurrence of $p$ in $W$. This implies that $c_p(W)>k$, contradicting property (\ref{property_4}) of Definition~\ref{SW_def}. So both of the $b$'s surrounding this $\hatted{a}$ are used in an occurrence $\mathbf{g}=g_1<\cdots< g_{\ell}$ of $p$ in $w$. Let $e$ be such that $w_{g_e}$ and $w_{g_{e+1}}$ equal the two $b$'s surrounding this $\hatted{a}$. Let $b^L$ be the run of $p$ that $w_{g_e}$ and $w_{g_{e+1}}$ constitute. Then $Q_1(b^L)$ precedes $w_{g_e}$ and $w_{g_{e+1}}$ and $Q_2(b^L)$ follows $w_{g_e}$ and $w_{g_{e+1}}$. Therefore, this $\hatted{a}$ lies in $V(b^L)$. See Example~\ref{exactly_one_part_one_ex} for a diagrammed example.
    
    For the sake of contradiction, suppose that this $\hatted{a}$ lies between both the pairs $Q_1(x_1^{L_1}),Q_2(x_1^{L_1})$ and $Q_1(x_2^{L_2}),Q_2(x_2^{L_2})$ for some distinct runs $x_1^{L_1},x_2^{L_2}$ of $p$. If $x_1\neq x_2$, then $\hatted{a}=\hatted{x_1}$ or $\hatted{a}=\hatted{x_2}$, contradicting Lemma~\ref{at_most_L_minus_1} in either case. Thus, assume that $x_1=x_2=1-a$. Then there exists a run $y^{u}$ in $p$ between $x_1^{L_1}$ and $x_2^{L_2}$, where $y=1-x_1=a$. Without loss of generality, suppose that $x_1^{L_1}$ precedes $x_2^{L_2}$ in $p$. As $y^u$ lies between $x_1^{L_1}$ and $x_2^{L_2}$, we have $Q_1(x_1^{L_1})\subseteq Q_1(y^u)\subseteq Q_1(x_2^{L_2})$ and $Q_2(x_2^{L_2})\subseteq Q_2(y^u)\subseteq Q_2(x_1^{L_1})$. So this $\hatted{a}$ lies between $Q_1(y^u)$ and $Q_2(y^u)$. Therefore, by Lemma~\ref{at_most_L_minus_1}, $a\neq y$, a contradiction. See Example~\ref{exactly_one_part_two_ex} for a diagrammed example.
\end{proof}

\begin{ex}\label{exactly_one_part_one_ex}
    \normalfont In the setting of the proof of Lemma~\ref{exactly_one_Q_1_Q_2_pair}, we provide an example showing that $\hatted{a}$ lying in $V(b^L)$ follows by the surrounding $b$'s being used in an occurrence of $p$ in $w$. 
    
    Let $p=1100111$ and $w=100110\hatted{1}0110011$. Selecting $\hatted{1}$ to be the $\hatted{a}$, the surrounding $b$'s are $w_6$ and $w_8$. Consider the following diagram displaying an occurrence $\mathbf{g}$ that uses both $w_6$ and $w_8$, showing how we select the run $b^L$.
    \begin{align*}
        \mathbf{g}&=\phantom{1~0~0}~1~1\hspace{0.25em}\overbrace{0~\phantom{\hatted{1}}~0}^{b^L}~1~1~\phantom{0~0}~1~\phantom{1}\\
        w&=1~0~0~1~1~{0}~{\hatted{1}}~{0}~1~1~0~0~1~1
    \end{align*}
    So in this case, $b^L=0^2$. Thus, we have the decomposition
    \begin{align*}
        w=\underbrace{1~0~0~1}_{Q_1(b^L)}~\Big|~\underbrace{1~{0}~{\hatted{1}}~{0}~1}_{V(b^L)}~\Big|~\underbrace{1~0~0~1~1}_{Q_2(b^L)}.
    \end{align*}
\end{ex}

\begin{ex}\label{exactly_one_part_two_ex}
    \normalfont In the setting of the proof of Lemma~\ref{exactly_one_Q_1_Q_2_pair}, we provide an example showing the inclusions $Q_1(x_1^{L_1})\subseteq Q_1(y^u)\subseteq Q_1(x_2^{L_2})$ and $Q_2(x_2^{L_2})\subseteq Q_2(y^u)\subseteq Q_2(x_1^{L_1})$.
    
    Let $p=0^2 1^5 0^5 1^4 0^3$ and $w=001001\hatted{0}11\hatted{0}11000\hatted{1}0\hatted{1}000111\hatted{0}1000$. Let $1^5$ and $1^4$ be the distinct runs $x_1^{L_1},x_2^{L_2}$ of $p$. Since they are distinct, we have the run $0^5=y^u$ that lies in between $x_1^{L_1}$ and $x_2^{L_2}$. To observe that $Q_1(1^5)\subseteq Q_1(0^5)\subseteq Q_1(1^4)$ and $Q_2(1^4)\subseteq Q_2(0^5)\subseteq Q_2(1^5)$, consider the following diagram.
    \begin{align*}
        w\phantom{aaaaaa}&\phantom{\rightarrow}0~0~1~0~0~1~\hatted{0}~1~1~\hatted{0}~1~1~0~0~0~\hatted{1}~0~\hatted{1}~0~0~0~1~1~1~\hatted{0}~1~0~0~0\\
        Q_1(1^5)\phantom{aa}Q_2(1^5)&\phantom{\rightarrow}0~0\phantom{~1~0~0~1~\hatted{0}~1~1~\hatted{0}~1~1~0~0~}0~\hatted{1}~0~\hatted{1}~0~0~0~1~1~1~\hatted{0}~1~0~0~0\\
        Q_1(0^5)\phantom{aa}Q_2(0^5)&\phantom{\rightarrow}0~0~1~0~0~1~\hatted{0}~1~1~\hatted{0}~1\phantom{~1~0~0~0~\hatted{1}~0~\hatted{1}~0~0~0}~1~1~1~\hatted{0}~1~0~0~0\\
        Q_1(1^4)\phantom{aa}Q_2(1^4)&\phantom{\rightarrow}0~0~1~0~0~1~\hatted{0}~1~1~\hatted{0}~1~1~0~0~0~\hatted{1}~0~\hatted{1}~0\phantom{~0~0~1~1~1~\hatted{0}~1}~0~0~0.
    \end{align*}
\end{ex}

Lastly, we state the formula for $B_p(n,1)$ provided by Menon and Singh in \cite{subseq_freq} for the reader to refer to, as it is frequently invoked in the proof of the second part of Theorem~\ref{polynomial_theorem}.

\begin{prop}[{\cite[Proposition 3.2]{subseq_freq}}]\label{menon_singh_result_k_1}
    Let $n\geq0$ and $p$ be a binary word of length $\ell$ with $r$ runs. Then
    \begin{align*}
        B_p(n,1)={n-r+1\choose\ell-r+1}.
    \end{align*}
\end{prop}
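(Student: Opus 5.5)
The plan is to characterize the words with exactly one occurrence of $p$ explicitly, and then count them by a balls-and-bins argument \cite[Section 1.9]{stanley_book}.

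Write the runs of $p$ as $x_1^{L_1}x_2^{L_2}\cdots x_r^{L_r}$, so $x_{t+1}=1-x_t$ and $\sum_t L_t=\ell$. Define the family $\mathcal{F}$ of words of the form
\begin{align*}
(1-x_1)^{a_0}\,\Big(\prod_{t=1}^{r} x_t\,(1-x_t)^{b_{t,1}}\,x_t\,(1-x_t)^{b_{t,2}}\cdots(1-x_t)^{b_{t,L_t-1}}\,x_t\Big)\,(1-x_r)^{a_1}.
\end{align*}
Here all exponents are nonnegative integers. In words, each run $x_t^{L_t}$ of $p$ is written with some (possibly empty) blocks of the opposite letter inserted into its $L_t-1$ internal gaps, and there are possibly nonempty negligible boundary runs at the two ends. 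I claim that $\mathcal{B}_p(n,1)$ is exactly the set of words of length $n$ in $\mathcal{F}$.

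First direction: every word of $\mathcal{F}$ has exactly one occurrence of $p$.
\begin{enumerate}[(a)]
\item The displayed $x$'s form an occurrence, so there is at least one.
\item The NBRs are never used, as noted before the definition of NBRs. So drop them.
\item Induct on the runs of $p$. The first $L_1$ letters of $p$ are all $x_1$. The only $x_1$'s available before the first $x_2$ of the word are the $L_1$ displayed ones, and the inserted letters in that stretch are $x_2$'s.
\item Any occurrence must therefore use exactly those $L_1$ copies of $x_1$. It cannot skip one, because there are then too few $x_1$'s before $x_2$'s become necessary.
\item Iterating run by run, or equivalently comparing the first and last occurrences, shows they coincide.
\end{enumerate}

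Second direction: a word $w$ with a unique occurrence $\mathbf{g}=g_1<\cdots<g_\ell$ lies in $\mathcal{F}$. Take any letter $z=w_s$ not used in $\mathbf{g}$.
\begin{enumerate}[(a)]
\item If $s<g_1$ and $z=p_1$, then replacing $g_1$ by $s$ gives a second occurrence. So $z=1-p_1$. Symmetrically, if $s>g_\ell$ then $z=1-p_\ell$.
\item If $g_e<s<g_{e+1}$ and $z\in\{p_e,p_{e+1}\}$, then replacing $g_e$, respectively $g_{e+1}$, by $s$ gives another occurrence, a contradiction.
\item When $p_e\neq p_{e+1}$ (a boundary between runs of $p$), $\{p_e,p_{e+1}\}=\{0,1\}$, so no unused letter can lie there at all.
\item When $p_e=p_{e+1}=x_t$ (an internal gap of a run), the only letters allowed there are $1-x_t$.
\end{enumerate}
This is exactly membership in $\mathcal{F}$.

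Finally, a word of length $n$ in $\mathcal{F}$ is determined by the exponent vector $(a_0,a_1,b_{t,j})$. This vector has $2+\sum_t(L_t-1)=\ell-r+2$ entries, they sum to $n-\ell$, and distinct vectors give distinct words. The number of such vectors is
\begin{align*}
{n-\ell+\ell-r+1\choose \ell-r+1}={n-r+1\choose\ell-r+1},
\end{align*}
which proves the proposition. Injectivity needs a remark: an inserted block sits strictly between two $x_t$'s, and NBRs are maximal boundary runs, so the run structure recovers every exponent.

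The main obstacle I expect is the uniqueness half of the first direction. One must check that the inserted $(1-x_t)$ blocks cannot be used as letters of the neighboring runs of $p$. They cannot, because using one would leave fewer than $L_t$ copies of $x_t$ on the correct side. I would make this rigorous by showing that the first and last occurrences of $p$ in a word of $\mathcal{F}$ both equal the displayed one, via a greedy left-to-right (respectively right-to-left) matching argument.
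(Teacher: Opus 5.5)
Your proof is correct and takes essentially the route the paper indicates: the paper cites Menon--Singh for this result, and its appendix proof of the $m$-ary analogue uses the same characterization. There, opposite letters may be inserted only into the $\ell-r$ internal gaps of runs and at the two ends, nothing into the $r-1$ gaps between runs, and a balls-and-bins count over $\ell-r+2$ bins finishes. The one soft spot is your run-by-run uniqueness sketch in (c)--(e), which as written does not rule out an occurrence drawing its $x_1$'s from later stretches; the greedy argument in your last paragraph closes it, since the leftmost and rightmost greedy embeddings are the lexicographically first and last occurrences, and both equal the displayed one, which forces uniqueness.
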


\begin{proof}[\textbf{Proof of Theorem~\ref{polynomial_theorem}}]

    Let $f$ be the long-run-replacement map as in Lemma~\ref{long_run_replacement_map}. Consequently, we write
    \begin{align}\label{preimage_equation}
        B_p(n,k)=\sum_{w\in\text{SW}_p(k)}\left|f^{-1}(\{w\})\right|,
    \end{align}
    which by Lemma~\ref{short_words_constant_in_n} is a finite sum. We proceed by showing for $n$ sufficiently large that the number of words in each $f^{-1}(\{w\})$ is a polynomial in $n$ of degree at most $\ell-r+1$.

    %By Lemma~\ref{at_most_L_minus_1}, all hatted letters in each $V(x^L)$ are only $\hatted{y}$'s. Thus, 

    %the set of binary words of length $n$ with exactly $k$ occurrences of $p$ that may be formed from replacing each hatted letter $\hatted{a}$ in $w$ with a long run of $a$'s

    Let $h_w$ denote the exact number of hatted letters in $w\in\text{SW}_p(k)$. We can use a balls-and-bins interpretation to enumerate the size of $f^{-1}(\{w\})$ in terms of $h_w$ by considering the bins to be the hatted letters, and the balls, counting from $k+r_{\max}$, to be the $a$'s replacing the $\hatted{a}$'s. That is, if a hatted letter $\hatted{a}$ is replaced by $a^r$, then we interpret this to be $r-(k+r_{\max})$ balls since each hatted letter needs to be replaced by a long run. Since we have $n-(\text{length}(w)-h_w)-h_w(k+r_{\max})$ balls and $h_w$ bins,
    \begin{align}\label{size_of_preimage}
        \left|f^{-1}(\{w\})\right|={n-\text{length}(w)-h_w(k+r_{\max}-2)-1\choose h_w-1}.
    \end{align}
     Thus, \eqref{preimage_equation} together with \eqref{size_of_preimage} give that
    \begin{align*}
         B_p(n,k)=\sum_{w\in\text{SW}_p(k)} {n-\text{length}(w)-h_w(k+r_{\max}-2)-1\choose h_w-1},
    \end{align*}
    which is a fixed polynomial of degree $\max_{w\in\text{SW}_p(k)}h_w-1$ for $n$ sufficiently large.
    
    % To avoid the number of occurrences of $p$ exceeding $k$, we claim that
  %  \begin{enumerate}[(i)]
  %      \item no $m$'s between $\mathbf{i}$ and $\mathbf{j}$ can be replaced by a long run of $x$'s,
    %    \item no $m$'s between $w_{i_{c}}$ and $w_{i_{c}+1}$ (assuming $u$ is nonempty) nor $w_{j_1-1}$ and $w_{j_1}$ (assuming $u'$ is nonempty) can be replaced by a long runs of $y$'s, and
    %    \item there are at most $L-1$ $m$'s in $V$, which by (i) can be replaced only by a long run of $y$'s.
  %  \end{enumerate}
   % Note that (i) and (ii) imply that, assuming $u,u'$ are nonempty, there are no $m$'s between $w_{i_{c}}$ and $w_{i_{c}+1}$ nor $w_{j_1-1}$ and $w_{j_1}$.
    
%    Thus, to avoid the number of occurrences exceeding $k$, for nonempty $u,u'$, there are at most $L-1$ $m$'s between $u$ and $u'$, and each can be replaced with a long run of only one letter type. 

Let notation be as in setting \settingstar. By Lemmas~\ref{at_most_L_minus_1},~\ref{nonempty_no_hatted_ys}, and~\ref{empty_cases}, the number of hatted letters in $V(x^L)$ is at most $L$ if $x^L$ is at the boundary of $p$, and is at most $L-1$ if $x^L$ is not at the boundary of $p$. So by Lemma~\ref{exactly_one_Q_1_Q_2_pair}, $h_w$ is at most
    \begin{align*}
        \sum_{\text{run $B$ of $p$ at the boundary}}\text{length}(B)+\sum_{\text{run $R$ of $p$ not at the boundary}}(\text{length}(R)-1)=\ell-r+2,
    \end{align*}
proving the first part of the theorem.

We now prove the second part of the theorem. Let $r_1$ be the number of runs of length $1$ in $p$, and for any $b\geq2$, let $r_{b,\partial}$ be the number of runs of length $b$ that are either the first or last run in $p$. Leveraging the existence of these two types of runs in $p$, we construct two families of binary words of length $n$ with exactly $k$ occurrences of $p$, each of which of size $\Theta(n^{\ell-r+1})$, showing that $\deg(B_p(n,k))=\ell-r+1$ for $n$ sufficiently large. Since the result is true for $k=1$, we may take $k\geq2$ to ensure that the two families are disjoint.

    The following construction generalizes \cite[Construction 3.1]{subseq_freq}. Let $p_j$ be a run of size $1$ in $p$ and $w'$ be a word of length $n-k+1$ with exactly one occurrence of $p$ at $i_1<\cdots<i_{\ell}$, which exists by Proposition~\ref{menon_singh_result_k_1} since $n$ is sufficiently large. Let $w$ be the word of length $n$ obtained by replacing $w'_{i_j}$ with $k$ copies of $p_j$. By construction, $c_p(w)=k$. Since there were $r_1$ choices for $p_j$, it follows from Proposition~\ref{menon_singh_result_k_1} that the number of ways to make this construction is $r_1{n-r+2-k\choose \ell-r+1}$. 
    %Proceeding by adding $k-1$ copies of $p_j$ immediately after $w_{i_j}'$ yields a word $w$ of length $n$ such that $c_p(w)=k$. 

    The following construction generalizes \cite[Construction 3.2]{subseq_freq}. Let $b\in\ZZ^+$ be such that ${b+s\choose b}=k$ for some $s\geq0$. Let $p_jp_{j+1}\cdots p_{j+b-1}$ be a run of size $b$ at the boundary of $p$ and $w'$ be a word of length $n-s$ with exactly one occurrence of $p$ at $i_{1}<\cdots<i_{\ell}$, which exists by Proposition~\ref{menon_singh_result_k_1} since $n$ is sufficiently large. If $j=1$ (so the run is the first run of $p$), adding $s$ copies of $p_1$ immediately after $w'_{i_b}$ yields a word $w$ of length $n$ such that $c_p(w)={b+s\choose b}=k$. If $j=\ell-b+1$ (so the run is the last run of $p$), adding $s$ copies of $p_{\ell}$ immediately before $w'_{i_{\ell-b+1}}$ yields a word $w$ of length $n$ such that $c_p(w)={b+s\choose b}=k$. Since there were $r_{b,\partial}$ choices for $p_jp_{j+1}\cdots p_{j+b-1}$, it follows from Proposition~\ref{menon_singh_result_k_1} that the number of ways to make this construction is $r_{b,\partial}{n-r-s+1\choose \ell-r+1}$. From these two constructions, it follows that
    \begin{align*}
        B_p(n,k)\geq r_1{n-r+2-k\choose \ell-r+1}+\sum_{b}r_{b,\partial}{n-r-s+1\choose \ell-r+1},
    \end{align*}
    where the sum is taken over all $b\geq2$ such that ${b+s\choose b}=k$ for some $s\geq0$. Thus, for $n$ sufficiently large, $\deg(B_p(n,k))=\ell-r+1$ since $r_1+\sum_{b} r_{b,\partial}>0$ by assumption.
\end{proof}

%to be the set of compositions of $0,1,\dots,R$ with exactly $r$ parts, each of which is required to be odd.

%The formula, stated in Proposition~\ref{combinatorial_formula_for_number_of_occurrences}, is a sum of products of differences of binomial coefficients, where the sum is taken over a class of restricted integer compositions\footnote{A \emph{composition} of an integer $n\geq0$ is a tuple of positive integers whose elements sum to $n$.} and each product is taken over the parts of a set partition (dependent on the composition) of the runs of $w$. 

 %The correspondence between this formula and this algorithm is expounded upon immediately following the statement of Proposition~\ref{combinatorial_formula_for_number_of_occurrences}. 

\section{Proof of Theorem~\ref{upper_bound_thm}}\label{section_3}

To establish the claimed bound, we first prove an exact combinatorial formula for $c_p(w)$ conducive to being bound from above, which merely algebraically encodes a natural algorithm for computing occurrences of $p$ by finding occurrences of the individual runs of $p$ in $w$. As we later show that the proof of the formula reduces to the case where $w$ has no NBRs, assume without loss of generality that $w$ has no NBRs. Letting $r$ denote the number of runs in $p$, the aforementioned algorithm for computing $c_p(w)$ can be loosely described by the following:
\begin{enumerate}[(i)]
    \item\label{1_algorithm} partition a prefix\footnote{A \emph{prefix} of $w$ is a subsequence of the form $w_1\cdots w_i$ for $1\leq i\leq \text{length}(w)$.} of $w$ into $r$ intervals in a particular manner to be described later;
    \item\label{2_algorithm} for each $1\leq i\leq r$, enumerate the occurrences of the $i$\textsuperscript{th} run of $p$ in the $i$\textsuperscript{th} interval of the partition such that at least one letter of the $i$\textsuperscript{th} run of $p$ is taken from the last run of the $i$\textsuperscript{th} interval;
    \item\label{3_algorithm} take the product of the enumerations in (\ref{2_algorithm}) over all $1\leq i\leq r$ to obtain the number of occurrences of $p$ in $w$ given the constraint of the given partition; and
    \item\label{4_algorithm} sum these products over all such partitions of prefixes of $w$.
\end{enumerate}

By construction, we show that each occurrence of $p$ in $w$ corresponds to exactly one such partition of a prefix of $w$; in the language of relations, the relation on the occurrences of $p$ in $w$ defined by $\mathbf{i}\sim\mathbf{j}$ if $\mathbf{i},\mathbf{j}$ both correspond to the same partition of a prefix of $w$ is an equivalence relation. The product described in (\ref{3_algorithm}) is then the size of each equivalence class under this relation.

Letting $R'$ be the number of runs of $w$ with all NBRs removed, we prove that these equivalence classes can be represented as a subset of the set of $r$-tuples $(c_1,\dots,c_r)$ of positive odd integers such that $\sum_{j=1}^{r}c_j\leq R'$. The exact value of $\sum_{j=1}^{r}c_j$ determines which prefix of $w$ is partitioned as in step (\ref{1_algorithm}) of the algorithm. That is, the prefix of $w=y_1^{v_1}\cdots y_s^{v_R}$ that we partition is $y_1^{v_1}\cdots y_s^{v_s}$, where $s=\sum_{j=1}^{r}c_j$.

\begin{definition}
    Let $r,R\in\ZZ^+$. We define $\mathscr{C}_{r,\text{\normalfont odd}}^{\leq R}$ to be the set of $r$-tuples $(c_1,\dots,c_r)$ of positive odd integers such that $\sum_{j=1}^{r}c_i\leq R$.
\end{definition}

We illustrate in the following example how to compute the corresponding tuple in $\mathscr{C}_{r,\text{\normalfont odd}}^{\leq R'}$ of an occurrence of $p$ in $w$. The reverse of this process, computing all occurrences corresponding to a given tuple in $\mathscr{C}_{r,\text{\normalfont odd}}^{\leq R'}$, is the same as steps (\ref{1_algorithm}), (\ref{2_algorithm}), and (\ref{3_algorithm}) of the algorithm.

\begin{ex}\label{inverse_of_algorithm}
    \normalfont Let $p=111001111$ and $w=10011110101001101110101$. Let $\mathbf{i}$ be the occurrence of $p$ described by the underlined letters in $\underline{1}00\underline{11}1101\underline{0}10\underline{011}0\underline{11}10101$. Begin by placing a bar in $\mathbf{i}$ immediately after the last letter of each run of $p$, obtaining
    \begin{align*}
        \underline{1}00\underline{11}\Big|1101\underline{0}10\underline{0}\Big|\underline{11}0\underline{11}\Big|10101.
    \end{align*}
    Now, slide each bar to the right until it does not lie in the middle of a single run of $w$. If a bar already does not lie in a single run of $w$ (e.g., the second bar above), then do not slide that bar. Applying this procedure to the above yields
    \begin{align*}
        \underline{1}00\underline{11}11\Big|01\underline{0}10\underline{0}\Big|\underline{11}0\underline{11}1\Big|0101,
    \end{align*}
    from which we obtain the partition $1001111,010100,110111$ of the prefix $1001111010100110111$ of $w$ into $r=3$ intervals. Finally, the corresponding tuple of $\mathbf{i}$ is then the number of runs in each interval of this partition, returning $(3,5,3)\in \mathscr{C}_{3,\text{\normalfont odd}}^{\leq 15}$. 
\end{ex}

\begin{comment}

The aforementioned algorithm essentially amounts to doing the inverse of the process illustrated in Example~\ref{inverse_of_algorithm}; that is, finding the number of occurrences of $p$ in $w$ corresponding to a given tuple in $\mathscr{C}_{r,\text{\normalfont odd}}^{\leq R'}$, as exemplified by the following.

\begin{ex}
    Let $p$ and $w$ be as in Example~\ref{inverse_of_algorithm} and $(3,5,3)\in \mathscr{C}_{3,\text{\normalfont odd}}^{\leq 15}$. Begin by placing bars after the $3$\textsuperscript{rd}, $(3+5)\textsuperscript{th}$, and $(3+5+3)$\textsuperscript{th} runs in $w$, obtaining
    \begin{align*}
        {1}00{11}11\Big|01{0}10{0}\Big|{11}0{11}1\Big|0101.
    \end{align*}
    Now apply step (ii) to this partition. Computing
\end{ex}

\end{comment}

Given a $(c_1,\dots,c_r)=C\in \mathscr{C}_{r,\text{\normalfont odd}}^{\leq R'}$ corresponding to a unique equivalence class, we now precisely describe the $r$ subsets $A_i\subseteq\{1,\dots,R'\}$ determining the runs of $w$ where the $i$\textsuperscript{th} run of $p$ may occur in given the restriction of $C$. In short, $C$ determines which runs of $w$ we are using to construct an occurrence of $p$.

\begin{definition}
    Given $C=(c_1,\dots,c_{r})\in\mathscr{C}_{r,\text{\normalfont odd}}^{\leq R}$, let $C([R])\defeq\{A_1,\dots,A_{r}\}$, where
    \begin{align*}
        A_i=\left\{1+\sum_{j=1}^{i-1}c_j,\dots,\sum_{j=1}^{i}c_j\right\}\cap(2\ZZ+i).
    \end{align*}
\end{definition}

We intersect with $2\ZZ+i$ since the letters comprising the $i$\textsuperscript{th} run of $p$ are equal only to the letters in $j$\textsuperscript{th} run of $w$ for any $j$ of the same parity as $i$. In Example~\ref{inverse_of_algorithm} where $C=(3,5,3)$,
\begin{align*}
    C([15])=\{\{1,3\},\{4,6,8\},\{9,11\}\}
\end{align*}
since we can use the first and third runs in the first interval to make $111$; the fourth, sixth, and eighth runs in the second interval to make $00$; and the ninth and eleventh runs in the third interval to make $1111$.

We now introduce some notation for Proposition~\ref{combinatorial_formula_for_number_of_occurrences} and its proof. Recall that for two integers $i,j$, the Kronecker delta $\delta_{i,j}$ is $1$ if $i=j$, and is $0$ otherwise. Given binary words $p$ and $w$, we define $\mathfrak{C}_p(w)$ to be the set of occurrences of $p$ in $w$, wherein we have $|\mathfrak{C}_p(w)|=c_p(w)$.

\begin{prop}\label{combinatorial_formula_for_number_of_occurrences}
    Let $p=x_1^{u_1}\cdots x_r^{u_r}$ and $w=y_1^{v_1}\cdots y_{R}^{v_R}$ be binary words. Let $R'$ be the number of runs of $w$ which are not NBRs and set $\epsilon=1-\delta_{x_1,y_1}$. Then
    \begin{align}\label{eqn_comb_formula}
        c_p(w)=\sum_{C\in\mathscr{C}_{r,\text{\normalfont odd}}^{\leq R'}}\prod_{A_i\in C([R'])}\left({\sum_{j\in A_i}v_{j+\epsilon}\choose u_i}-{\sum_{j\in A_i\setminus\{\max(A_i)\}}v_{j+\epsilon}\choose u_i}\right).
    \end{align}
\end{prop}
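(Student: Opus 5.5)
First I reduce to the case where $w$ has no NBRs. Removing NBRs does not change $c_p(w)$, as observed before the definition of NBRs. If the first run of $w$ is an NBR, then $y_1\neq x_1$, so $\epsilon=1$, and the runs of $w'$ are $y_2^{v_2},\dots$. The shift $v_{j+\epsilon}$ in \eqref{eqn_comb_formula} is exactly the reindexing of the run lengths of $w'$. A trailing NBR merely reduces $R$ to $R'$, and never enters the formula. So it suffices to prove the formula when $w=y_1^{v_1}\cdots y_{R'}^{v_{R'}}$ has no NBRs and $y_1=x_1$. In that case the $j$\textsuperscript{th} run of $w$ consists of the letter $x_i$ precisely when $j\equiv i \pmod 2$, which explains the intersection with $2\ZZ+i$ in the definition of $C([R'])$.

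Next I build a map $\Phi\colon\mathfrak{C}_p(w)\to\mathscr{C}_{r,\text{odd}}^{\leq R'}$ following the bar-sliding procedure of Example~\ref{inverse_of_algorithm}. Given an occurrence $\mathbf{i}$, let $t_i$ be the index of the run of $w$ containing the last letter used for the $i$\textsuperscript{th} run $x_i^{u_i}$ of $p$. Since consecutive runs of $p$ alternate letters, $t_1<t_2<\cdots<t_r$, and $t_i\equiv i\pmod 2$ because the $t_i$-th run of $w$ is a run of $x_i$'s. Set $c_1=t_1$ and $c_i=t_i-t_{i-1}$. These are positive and odd: $t_1$ is odd, and consecutive $t_i$ have opposite parity. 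Moreover $\sum c_i=t_r\leq R'$. Placing bars after runs $t_1,\dots,t_r$ is exactly the slid-bar partition. The letters of $\mathbf{i}$ for the $i$\textsuperscript{th} run of $p$ lie in runs $t_{i-1}+1,\dots,t_i$ of $w$. The lower bound holds because the first of these letters comes after the last letter of run $i-1$, which sits in run $t_{i-1}$ and has a different value. Hence these letters lie in the $x_i$-runs indexed by $A_i$, with at least one letter in run $\max A_i=t_i$.

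Conversely, fix $C$ and define $t_i=\sum_{j\le i}c_j$. I claim that choosing, independently for each $i$, a $u_i$-subset of the letters in the runs indexed by $A_i$ that meets the run $\max A_i$ always produces a valid occurrence of $p$ with $\Phi=C$. Validity holds because the blocks $\{t_{i-1}+1,\dots,t_i\}$ are disjoint and increasing, so the chosen index sets concatenate in increasing order and spell $x_1^{u_1}\cdots x_r^{u_r}$. The image is $C$ because the last chosen letter of block $i$ lies in run $t_i$. Together with the previous paragraph, this shows that $\Phi^{-1}(C)$ is in bijection with the product of these independent choices.

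For each $i$, the number of such choices is the number of $u_i$-subsets of $\sum_{j\in A_i}v_j$ letters, minus those avoiding the last run. This equals
\[
{\sum_{j\in A_i}v_j\choose u_i}-{\sum_{j\in A_i\setminus\{\max A_i\}}v_j\choose u_i}.
\]
Summing over $C$ gives \eqref{eqn_comb_formula}. Tuples $C$ with empty fibres contribute zero automatically, since then some factor vanishes. The step I expect to be most delicate is the well-definedness and injectivity bookkeeping in the second paragraph: showing that the letters for run $i$ cannot spill into runs before $t_{i-1}+1$, and that the parity and oddness claims hold. These all follow from the alternation of letters in both $p$ and $w$ and from the choice of $t_i$ as the run containing the last letter used for run $i$.
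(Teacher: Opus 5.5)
Your proposal is correct and follows essentially the same route as the paper. Both arguments reduce to the case with no NBRs and assign to each occurrence the tuple of differences of the run indices that contain the last letter of each run of $p$. Both then count each fibre as a product of ``all subsets minus those avoiding the last run'' binomial differences. You are more explicit than the paper in two places: the converse direction (every independent choice of subsets yields an occurrence whose tuple is $C$), and the reason the letters for the $i$\textsuperscript{th} run cannot lie in run $t_{i-1}$.
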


With the formal statement of our combinatorial formula written, we can now see exactly how Proposition~\ref{combinatorial_formula_for_number_of_occurrences} algebraically encodes the algorithm described at the beginning of this section:
\begin{enumerate}[(i)]
    \item selecting a $C\in\mathscr{C}_{r,\text{\normalfont odd}}^{\leq R'}$ corresponds to step (\ref{1_algorithm});
    \item the factors in the product are the enumerations described in step (\ref{2_algorithm});
    \item step (\ref{3_algorithm}) is equivalent to the product; and
    \item summing over all $C\in\mathscr{C}_{r,\text{\normalfont odd}}^{\leq R'}$ is as described in step (\ref{4_algorithm}).
\end{enumerate}

\begin{proof}[Proof of Proposition~\ref{combinatorial_formula_for_number_of_occurrences}]
    We first note that the proof reduces to the case where $w$ has no NBRs, i.e., $x_1=y_1$ and $x_r=y_R$. Indeed, letting $w'=y_{1+\epsilon}^{v_{1+\epsilon}}\cdots y_{R'+\epsilon}^{v_{R'+\epsilon}}$, it follows that $c_p(w)=c_p(w')$ and $w'$ has no NBRs.
    
    Thus, restricting to the case where $x_1=y_1$ and $x_r=y_R$, let $\mathbf{i}=(i_1<\cdots<i_{\ell})\in\mathfrak{C}_p(w)$. For each $1\leq a\leq r$, let $s(a)=\sum_{j=1}^{a}u_j$ and $I_a=i_{s(a)}$ and $g_{\mathbf{i}}(a)$ be such that $w_{I_a}$ lies in the run $y_{g_{\mathbf{i}}(a)}^{v_{g_{\mathbf{i}}(a)}}$. In words, $g_{\mathbf{i}}(a)$ is the index of the run in $w$ which contains the last letter of the $a$\textsuperscript{th} run of $p$ with respect to $\mathbf{i}$. In Example~\ref{inverse_of_algorithm}, we have $g_{\mathbf{i}}(1)=3$, $g_{\mathbf{i}}(2)=3+5=8$, and $g_{\mathbf{i}}(3)=3+5+3=11$. 
    
    Define $C=(g_{\mathbf{i}}(1),g_{\mathbf{i}}(2)-g_{\mathbf{i}}(1),g_{\mathbf{i}}(3)-g_{\mathbf{i}}(2),\dots,g_{\mathbf{i}}(r)-g_{\mathbf{i}}(r-1))$ to be the $r$-tuple associated to $\mathbf{i}$. We claim that each element of $C$ is odd. As $x_1=y_1$, an occurrence of an odd-indexed run of $p$ in $w$ must lie only in the odd-indexed runs of $w$. Similarly, an occurrence of an even-indexed run of $p$ in $w$ must lie only in the even-indexed runs of $w$. So $g_{\mathbf{i}}(n)$ is of the same parity as $n$. Thus, each element of $C$ is odd. Observing that the sum of the elements of $C$ telescopes to $g_{\mathbf{i}}(r)\leq R$, we have
    \begin{align*}
        \{r\text{-tuple~associated~to~}\mathbf{i}:\mathbf{i}\in\mathfrak{C}_{p}(w)\}\subseteq\mathscr{C}_{r,\text{odd}}^{\leq R}.
    \end{align*}
    So
    \begin{align}\label{partition_of_occurrences}
        c_p(w)=\sum_{C\in\mathscr{C}_{r,\text{odd}}^{\leq R}}\left|\{\mathbf{i}\in\mathfrak{C}_{p}(w):C=r\text{-tuple~associated~to~}\mathbf{i}\}\right|.
    \end{align}
    To complete the proof, we now enumerate the summands in \eqref{partition_of_occurrences}. Let $(c_1,\dots,c_r)=C\in \mathscr{C}_{r,\text{odd}}^{\leq R}$ and $\mathbf{i}\in\mathfrak{C}_{p}(w)$ be such that $C$ equals the $r\text{-tuple associated to }\mathbf{i}$. Let $1\leq t\leq r$. For notational convenience, set $g_{\mathbf{i}}(0)=0$. 
    
    %By construction, the last letter of the run $x_t^{u_t}$ occurs in the run $y_{g_{\mathbf{i}}(t)}^{v_{g_{\mathbf{i}}(t)}}$. Furthermore, the first letter of the run $x_t^{u_t}$ occurs in a run after the run $y_{g_{\mathbf{i}}(t-1)}^{v_{g_{\mathbf{i}}(t-1)}}$. 
    
    By construction, the last letter of the $t$\textsuperscript{th} run of $p$ occurs in the $g_{\mathbf{i}}(t)$\textsuperscript{th} run of $w$. Furthermore, the first letter of the $t$\textsuperscript{th} run of $p$ occurs in a run after the $g_{\mathbf{i}}(t-1)$\textsuperscript{th} run of $w$. Thus, as $x_t=y_b$ only if $b$ has the same parity as $t$, the $t$\textsuperscript{th} run of $p$ may only occur in runs with indices in $\{g_{\mathbf{i}}(t-1)+1,g_{\mathbf{i}}(t-1)+3\dots,g_{\mathbf{i}}(t)-2,g_{\mathbf{i}}(t)\}$, with at least one letter occurring in the $g_{\mathbf{i}}(t)$\textsuperscript{th} run of $w$. Noting that $\sum_{h=1}^{T}c_h=g_{\mathbf{i}}(T)$, the occurrences of the $t$\textsuperscript{th} run of $p$ with the restriction of $C$ are enumerated by the $u_t$-element subsets of
    \begin{align}\label{big_set_prop_proof}
        \left\{1,\dots,v_{g_{\mathbf{i}}(t-1)+1}+v_{g_{\mathbf{i}}(t-1)+3}+\cdots+v_{g_{\mathbf{i}}(t)-2}+v_{g_{\mathbf{i}}(t)}\right\}=\left\{1,\dots,\sum_{j\in A_t} v_j\right\}
    \end{align}
    containing at least one of the $v_{g_{\mathbf{i}}(t)}=v_{\max(A_t)}$ largest elements of \eqref{big_set_prop_proof}. The number of these subsets is
    \begin{align*}
        {\sum_{j\in A_t}v_{j}\choose u_t}-{\sum_{j\in A_t\setminus\{\max(A_t)\}}v_{j}\choose u_t},
    \end{align*}
    which may be seen via an all-minus-bad strategy. Thus, the size of $\{\mathbf{i}\in\mathfrak{C}_{p}(w):C=r\text{-tuple~associated~to~}\mathbf{i}\}$ is
    \begin{align}\label{complicated_summand}
        \prod_{A_t\in C([R])}\left({\sum_{j\in A_t}v_{j}\choose u_t}-{\sum_{j\in A_t\setminus\{\max(A_t)\}}v_{j}\choose u_t}\right),
    \end{align}
    yielding \eqref{eqn_comb_formula}.
\end{proof}

We remark that another combinatorial formula for $c_p(w)$ was given recently by Tian in \cite[Theorem 4.0.6]{other_formula_for_occurrence_count}. Counting between the first and last occurrence of $p$ in $w$, Tian's formula uses nested summations (one for each run of $p$) to determine exactly which runs of $w$ are used to make an occurrence of $p$. As occurrences are intricate objects to enumerate, the moral difference between Proposition~\ref{combinatorial_formula_for_number_of_occurrences} and \cite[Theorem 4.0.6]{other_formula_for_occurrence_count} lies in where we decide to bake in the structural complexity of $\mathfrak{C}_p(w)$. In Tian's formula, the indexing sets of the nested summations are complicated, allowing the summands to be a simple product of nonzero binomial coefficients. In contrast, our formula has complicated summands, i.e., \eqref{complicated_summand}, which may be zero, allowing for the indexing set $\mathscr{C}_{r,\text{\normalfont odd}}^{\leq R'}$ of the summation to be easily enumerated.

Constructing Proposition~\ref{combinatorial_formula_for_number_of_occurrences} in this contrasting manner allows us to tractably obtain an upper bound on $c_p(w)$. With the advent of our combinatorial formula, the proof of Theorem~\ref{upper_bound_thm} simply rests on the fact that
\begin{align*}
    \sum_{s\in S}a_s\leq |S|\max_{s\in S}a_s
\end{align*}
for a finite set $S$ and a sequence $\{a_s\}_{s\in S}$. We proceed by enumerating the size of $\mathscr{C}_{r,\text{\normalfont odd}}^{\leq R}$.

\begin{lemma}\label{enumeration_of_comps}
    Let $r,R\in\ZZ^+$. Then
    \begin{align*}
        \left|\mathscr{C}_{r,\text{\normalfont odd}}^{\leq R}\right|={\left\lfloor\frac{R-r}{2}\right\rfloor+r\choose r}.
    \end{align*}
\end{lemma}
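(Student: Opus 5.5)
The count is obtained by a change of variables that turns odd compositions into ordinary weak compositions, followed by a hockey-stick summation.

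\textbf{Step 1: substitution.} Write each positive odd part as $c_j=2d_j+1$ with $d_j\ge 0$. Then the constraint $\sum_j c_j\le R$ becomes $2\sum_j d_j+r\le R$. Equivalently, $\sum_j d_j\le (R-r)/2$, and since the $d_j$ are integers this is the same as $\sum_j d_j\le M:=\lfloor (R-r)/2\rfloor$. The map $(c_1,\dots,c_r)\mapsto(d_1,\dots,d_r)$ is a bijection between $\mathscr{C}_{r,\text{\normalfont odd}}^{\leq R}$ and the set of $r$-tuples of nonnegative integers with sum at most $M$.

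\textbf{Step 2: slack variable.} Introduce $d_{r+1}=M-\sum_{j=1}^r d_j\ge 0$. The tuples from Step 1 then correspond bijectively to weak compositions of $M$ into $r+1$ parts. By stars and bars these number ${M+r\choose r}$, which is the claimed formula. Equivalently, one can sum ${m+r-1\choose r-1}$ over $0\le m\le M$ and apply the hockey-stick identity.

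\textbf{Step 3: edge case.} The only point needing care is $R<r$. Then no tuple exists, since every part is at least $1$, so the left side is $0$. On the right, $M=\lfloor (R-r)/2\rfloor$ is negative with $M\ge -r/2>-r$ because $R\ge 1$. Hence $0\le M+r<r$, and ${M+r\choose r}=0$ as well. I will add a sentence checking this. The rest is routine, and I do not expect any step to be a real obstacle.
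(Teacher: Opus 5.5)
Your proof is correct, and it takes a more elementary, bijective route than the paper. The paper works in three stages:
\begin{enumerate}[(i)]
    \item it writes down the generating function $z^r/(1-z^2)^r$ for compositions into exactly $r$ odd parts and extracts $[z^{2n+r}]=\binom{n+r-1}{r-1}$;
    \item it sums these counts over all totals $j\le R$ with $j\equiv r \pmod 2$, which requires the reindexing $j\mapsto 2j+(r \bmod 2)$ and the floor identity $\lfloor (R-(r\bmod 2))/2\rfloor-\lfloor r/2\rfloor=\lfloor (R-r)/2\rfloor$;
    \item it closes with the hockey-stick identity.
\end{enumerate}
Your substitution $c_j=2d_j+1$ handles the coefficient extraction and the parity bookkeeping in one step. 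Your slack variable $d_{r+1}$ replaces the hockey-stick summation; stars and bars with a slack variable is in effect a bijective proof of that identity. You also treat $R<r$ explicitly, which the paper leaves to an empty sum, and your verification that $\binom{M+r}{r}=0$ there is right. The paper's generating-function framing extends directly to compositions whose parts come from an arbitrary allowed set, where no clean substitution exists. For this particular lemma, though, your argument is shorter and self-contained.
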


%Let $\mathcal{C}_{r,\text{odd}}$ be the combinatorial class of such compositions and $\mathcal{I}_{\text{odd}}$ be the combinatorial class of the positive odd integers. Building one of these restricted compositions is equivalent to building a sequence of positive odd integers of length $r$, so $\mathcal{C}_{r,\text{odd}}$ admits the specification
 %   \begin{align*}
   %     \mathcal{C}_{r,\text{odd}}=\textsc{Seq}_{r}(\mathcal{I}_{\text{odd}}),
  %  \end{align*}
%    which by \cite[Theorem I.3]{analytic_combo} directly translates to the generating function

\begin{proof}
    We first compute the generating function $C(z)$ of the enumeration of compositions with exactly $r$ parts such that each is odd. A standard result on the combinatorics of compositions (see, e.g., \cite[Theorem 2.1]{GF_of_restricted_comps}) lends the formula
    \begin{align*}
        C(z)=\left(\sum_{n\geq0}z^{2n+1}\right)^r=\frac{z^r}{\left(1-z^2\right)^r}.
    \end{align*}
    Note that the $j$\textsuperscript{th} coefficient of $C(z)$ is nonzero only if $j-r$ is nonnegative and even. In particular, by a usual application of Newton's generalized binomial theorem,
    \begin{align}\label{newton_expansion}
        [z^{2n+r}]\frac{z^r}{\left(1-z^2\right)^r}={n+r-1\choose r-1}.
    \end{align}
    So by \eqref{newton_expansion} and the change of index $j\mapsto 2j+(r\text{~mod~} 2)$,
    \begin{align*}
        \left|\mathscr{C}_{r,\text{\normalfont odd}}^{\leq R}\right|&=\sum_{\substack{r\leq j\leq R\\ j-r\text{~even}}}{\frac{j-r}{2}+r-1\choose r-1}=\sum_{j=\lfloor r/2\rfloor}^{\left\lfloor\frac{R-(r\text{~mod~}2)}{2}\right\rfloor}{j-\lfloor r/2\rfloor+r-1\choose r-1}\\
        &=\sum_{j=0}^{\left\lfloor\frac{R-(r\text{~mod~}2)}{2}\right\rfloor-\left\lfloor r/2\right\rfloor}{j+r-1\choose r-1}\stackrel{*}{=}{\left\lfloor\frac{R-r}{2}\right\rfloor+r\choose r},
    \end{align*}
    where the starred equality follows from the hockey-stick identity \cite{hockey_stick} and $\left\lfloor{(R-(r\text{~mod~}2))}/2\right\rfloor-\left\lfloor r/2\right\rfloor=\lfloor (R-r)/2\rfloor$.
\end{proof}

The remainder of this section is dedicated to finding the maximum of the product in \eqref{eqn_comb_formula}. The critical step for maximizing this product is obtaining an upper bound for the difference ${x+m\choose k}-{x\choose k}$. Before proving this upper bound, we first note a simple bound by induction on the sum of powers of the first $a\in\ZZ^+$ positive integers.

\begin{lemma}\label{faulhabers_rule_bound}
    Let $a,k\in\ZZ^+$. Then $\sum_{i=1}^{a}i^{k-1}\leq a^k$.
\end{lemma}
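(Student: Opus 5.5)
We argue by induction on $a$, with $k\in\ZZ^+$ fixed throughout. Since every term is nonnegative, a direct comparison is also possible; we record both approaches and choose the cleaner one.

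\emph{Base case.} For $a=1$ the left side is $1^{k-1}=1$ and the right side is $1^k=1$, so the inequality holds with equality.

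\emph{Inductive step.} Assume $\sum_{i=1}^{a}i^{k-1}\leq a^k$. Then
\begin{align*}
    \sum_{i=1}^{a+1}i^{k-1}\leq a^k+(a+1)^{k-1},
\end{align*}
so it suffices to show $a^k+(a+1)^{k-1}\leq (a+1)^k$. Rearranging, this is equivalent to
\begin{align*}
    a^k\leq (a+1)^k-(a+1)^{k-1}=a(a+1)^{k-1},
\end{align*}
that is, $a^{k-1}\leq (a+1)^{k-1}$. This holds because $a\geq0$ and $k-1\geq0$.

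\emph{Alternative direct argument.} Each of the $a$ summands satisfies $i^{k-1}\leq a^{k-1}$, since $1\leq i\leq a$ and $k-1\geq0$. Summing over $i$ gives at most $a\cdot a^{k-1}=a^k$.

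The only point needing care is the edge case $k=1$, where the exponent $k-1$ vanishes. There both sides equal $a$, and both arguments above remain valid, so there is no real obstacle. We would present the one-line direct bound and note that induction works equally well.
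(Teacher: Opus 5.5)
Your proof is correct. Your inductive argument is the paper's: it also inducts on $a$ and reduces to $a^k+(a+1)^{k-1}\le (a+1)^k$. The only difference in that route is how the last inequality is checked. The paper expands both powers of $a+1$ binomially and compares coefficients via $\binom{k-1}{j}\le\binom{k}{j}$. Your factorization $(a+1)^k-(a+1)^{k-1}=a(a+1)^{k-1}$ reduces it in one step to $a^{k-1}\le (a+1)^{k-1}$.

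The direct argument you say you would actually present is a genuinely different and simpler route. Bounding each of the $a$ summands by the largest one, $a^{k-1}$, needs no induction at all. It also makes plain how crude the estimate is, since the sum is really about $a^k/k$. That looseness is harmless here, because only the stated inequality is used in Lemma~\ref{difference_bound}. Both routes give exactly the same bound, so the one-line version is the better choice.
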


\begin{proof}
    This result lends a simple proof by induction over $a\in\ZZ^+$. Noting that the base case of $a=1$ holds, the induction step follows from
    \begin{align*}
        \sum_{i=1}^{a+1}i^{k-1}&=(a+1)^{k-1}+\sum_{i=1}^{a}i^{k-1}\leq (a+1)^{k-1}+a^k\\
        &={k\choose k}a^k+\sum_{j=0}^{k-1}{k-1\choose j}a^j\leq \sum_{j=0}^{k}{k\choose j}a^j=(a+1)^k.
    \end{align*}
\end{proof}

With Lemma~\ref{faulhabers_rule_bound} noted, we prove our upper bound on a difference of binomial coefficients, which is the backbone of the proof of Theorem~\ref{upper_bound_thm}.

\begin{lemma}\label{difference_bound}
    Let $m,k\in\ZZ^+$. Then for any nonnegative integer $x\geq k-m$,
    \begin{align*}
        {x+m\choose k}-{x\choose k}\leq m(x+m+1-k)^{k-1}.
    \end{align*}
\end{lemma}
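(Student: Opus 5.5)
The plan is to telescope the difference with Pascal's rule and then bound each term by a power. Pascal's rule ${t+1\choose k}-{t\choose k}={t\choose k-1}$ gives
\begin{align*}
    {x+m\choose k}-{x\choose k}=\sum_{t=x}^{x+m-1}{t\choose k-1}.
\end{align*}
The target bound is $m$ times a fixed quantity, so it suffices to bound each of the $m$ summands by $(x+m+1-k)^{k-1}$.

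To bound a single summand, expand the binomial coefficient as a product:
\begin{align*}
    {t\choose k-1}=\prod_{i=1}^{k-1}\frac{t-k+1+i}{i}.
\end{align*}
The factors with $i=1,\dots,k-1$ each satisfy $(t-k+1+i)/i\le t-k+2$. This holds when $t-k+1\ge 0$, because $(A+i)/i\le A+1$ for $A\ge0$ and $i\ge1$.

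\begin{itemize}
    \item If $t\ge k-1$, we get ${t\choose k-1}\le (t-k+2)^{k-1}$. Since $t\le x+m-1$, this is at most $(x+m+1-k)^{k-1}$.
    \item If $t<k-1$, then ${t\choose k-1}=0$ and the bound is trivial.
\end{itemize}

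The base $x+m+1-k$ is nonnegative by the hypothesis $x\ge k-m$. Indeed it is at least $1$, so raising to the power $k-1$ is monotone and no sign issues arise. The case $k=1$ is immediate: both sides equal $m$, using $0^0=1$ if needed, which it is not since the base is at least $1$. Summing the $m$ bounds gives the claim.

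The main obstacle is the termwise inequality ${t\choose k-1}\le(t-k+2)^{k-1}$, which is the only non-formal step. I would check it via the factor-by-factor comparison above rather than through Lemma~\ref{faulhabers_rule_bound}.

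Alternatively, one could reindex the sum and use Lemma~\ref{faulhabers_rule_bound} to get a bound of the form $\sum i^{k-1}\le a^k$. That route yields a different shape, roughly $a^k$ with $a=x+m+1-k$, which is weaker than $m\,a^{k-1}$ when $m<a$. So I would use the direct termwise bound; it gives exactly the stated form with the factor $m$.
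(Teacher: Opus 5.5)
Your proof is correct, but it takes a different route from the paper. The paper inducts on $k$. It applies Pascal's rule to both coefficients at once and telescopes $\binom{x+m}{k+1}-\binom{x}{k+1}$ by shifting $x$ down while keeping the gap $m$ fixed. This produces $x+m-k$ differences $\binom{x+m-i}{k}-\binom{x-i}{k}$, each bounded by the induction hypothesis by $m(x+m+1-i-k)^{k-1}$. It then collapses $m\sum_{i=1}^{x+m-k}i^{k-1}\le m(x+m-k)^{k}$ using Lemma~\ref{faulhabers_rule_bound}. So the paper does use the power-sum lemma without losing the factor $m$: the $m$ is carried by the induction hypothesis, and the lemma raises the exponent exactly as the level goes from $k$ to $k+1$. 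Your closing objection applies only to applying the lemma to your own $m$ summands. You instead telescope across the gap into the $m$ terms $\binom{t}{k-1}$, $x\le t\le x+m-1$, and bound each one directly by $(t-k+2)^{k-1}\le(x+m+1-k)^{k-1}$ via the factorwise estimate $(A+i)/i\le A+1$. Your route is shorter, needs neither induction nor the auxiliary lemma, and never leaves nonnegative upper indices. By contrast, when $m>k$ the paper's telescoping down to $j=x+m-k$ reaches $\binom{x-j}{k+1}$ with $x-j=k-m<0$, where the induction hypothesis (stated for nonnegative $x$) does not literally apply; your argument avoids that boundary issue. Your decomposition also shows at once the sharper intermediate bound $m\binom{x+m-1}{k-1}$, by monotonicity of $\binom{t}{k-1}$ in $t$.
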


\begin{proof}
    We prove this claim by induction over $k\in\ZZ^+$. The base case of $k=1$ holds since
    \begin{align*}
        {x+m\choose 1}-{x\choose 1}=m\leq m(x+m+1-k)^0.
    \end{align*}
    For the induction step, employing Pascal's rule, we obtain
    \begin{align*}
        {x+m\choose k+1}-{x\choose k+1}&={x+m-1\choose k+1}-{x-1\choose k+1}+{x+m-1\choose k}-{x-1\choose k}\\
        &\leq {x+m-1\choose k+1}-{x-1\choose k+1}+m(x+m-k)^{k-1}\\
        &\leq {x+m-2\choose k+1}-{x-2\choose k+1}+\sum_{i=1}^{2}m(x+m+1-i-k)^{k-1}.
    \end{align*}
    Continuing in this way, for sufficiently small $j$, we have
    \begin{align*}
        {x+m\choose k+1}-{x\choose k+1}\leq {x+m-j\choose k+1}-{x-j\choose k+1}+\sum_{i=1}^{j}m(x+m+1-i-k)^{k-1}.
    \end{align*}
    In particular, setting $j=x+m-k$ yields   
    \begin{align*}    
        {x+m\choose k+1}-{x\choose k+1}\leq\sum_{i=1}^{x+m-k}m(x+m+1-i-k)^{k-1}=m\sum_{i=1}^{x+m-k}i^{k-1},
    \end{align*}
    which is at most $m(x+m-k)^{k}$ by Lemma~\ref{faulhabers_rule_bound}.
\end{proof}

With Lemma~\ref{difference_bound}, we can bound the individual factors in \eqref{eqn_comb_formula}, effectively reducing the proof of Theorem~\ref{upper_bound_thm} to maximizing the product of these upper bounds. 

We now recall the definition of the indicator function: given a set $X$, the indicator function of $A\subseteq X$ is the function
    \begin{align*}
        \mathds{1}_{A}(x)\defeq\begin{cases}
            1 & \text{if~}x\in A,\\
            0 & \text{if~}x\not\in A.
        \end{cases}
    \end{align*}
for all $x\in X$. For $r,R\in\ZZ^+$, we remark that $\mathds{1}_{2\ZZ+R}(r)$ is simply $1$ if $r,R$ are of the same parity, and $0$ otherwise. We use this expression multiple times in the proof of the following lemma.

%\textcolor{red}{define indicator function, remind that $\mathds{1}_{2\ZZ+R}(r)$ is simply $1$ if $r,R$ are of same parity and $0$ if not}

\begin{lemma}\label{maximizing_lemma}
    Given $r,R\in\ZZ^+$, the maximum of $\prod_{i=1}^{r}a_i$ over $\mathscr{C}_{r,\text{\normalfont odd}}^{\leq R}$ is obtained by $(\sigma_{r,R}(1),\dots,\sigma_{r,R}(r))$, where
    \begin{align*}
        \sigma_{r,R}(i)=
        \begin{cases}
            \oddleq\mleft(R/r\mright)+2 & \text{\normalfont if~}i\leq \frac{1}{2}{\evenleq\mleft(R-r\oddleq\mleft(R/r\mright)\mright),}\\
            \oddleq\mleft(R/r\mright) & \text{\normalfont otherwise.}
        \end{cases}
    \end{align*}
\end{lemma}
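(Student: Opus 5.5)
Write $q=\oddleq(R/r)$ and $D=R-rq\geq0$; the lemma implicitly needs $r\leq R$, so that the set is nonempty and $q\geq1$. The plan is a standard smoothing (exchange) argument for maximizing a product of positive integers under a sum constraint, with the parity restriction handled by moving mass in steps of $2$.

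\textbf{Step 1 (smoothing).} Let $(a_1,\dots,a_r)\in\mathscr{C}_{r,\text{\normalfont odd}}^{\leq R}$ maximize $\prod a_i$. Suppose some $a_i\geq a_j+4$. Replace $(a_i,a_j)$ by $(a_i-2,a_j+2)$. This keeps both entries odd and positive, and keeps the sum unchanged. The product strictly increases, since
\begin{align*}
(a_i-2)(a_j+2)-a_ia_j=2(a_i-a_j)-4>0.
\end{align*}
Hence a maximizer satisfies $|a_i-a_j|\leq 2$ for all $i,j$. By parity, the entries then take at most two consecutive odd values $m$ and $m+2$.

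\textbf{Step 2 (saturation).} If $\sum a_i\leq R-2$, increasing any single $a_i$ by $2$ stays in the set and increases the product. So a maximizer has $\sum a_i\in\{R-1,R\}$, the choice being forced by the parity condition $\sum a_i\equiv r \pmod 2$. Concretely, $\sum a_i=R$ if $R\equiv r$, and $\sum a_i=R-1$ otherwise. In both cases
\begin{align*}
\sum a_i = rq+\evenleq(D),
\end{align*}
because $D\equiv R-r \pmod 2$, as $q$ is odd.

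\textbf{Step 3 (identification).} Write the maximizer as $t$ entries equal to $m+2$ and $r-t$ entries equal to $m$, with $0\leq t<r$. Reading the sum gives $rm+2t=rq+\evenleq(D)$.
\begin{itemize}
\item First show $m=q$. Since $m\leq \sum a_i/r\leq R/r$ and $m$ is odd, we get $m\leq q$.
\item If $m\leq q-2$, then $2t\geq 2r+\evenleq(D)\geq 2r$, contradicting $t<r$.
\item With $m=q$ we obtain $t=\tfrac12\evenleq(D)$. The bound $t<r$ is consistent because $D<2r$: indeed $q+2>R/r$ by the definition of $\oddleq$.
\end{itemize}
Since the product is symmetric, we may order the entries so that the first $t$ equal $q+2$, which is exactly $(\sigma_{r,R}(1),\dots,\sigma_{r,R}(r))$. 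This tuple lies in the set: its sum is $rq+\evenleq(D)\leq R$ and all entries are odd. Because the set is finite, a maximizer exists, so this tuple attains the maximum.

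\textbf{Main obstacle.} The main difficulty is the bookkeeping in Step 3, namely verifying that $m$ must equal $\oddleq(R/r)$ rather than a smaller odd number. This rests on the inequality $D<2r$ together with the parity identity $\sum a_i=rq+\evenleq(D)$. I would check these two facts carefully, including the edge cases where $R/r$ is itself an odd integer and where $D=0$ or $D=1$.
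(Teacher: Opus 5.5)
Your proof is correct and follows essentially the same route as the paper: pairwise smoothing $(a_i,a_j)\mapsto(a_i-2,a_j+2)$ forces all entries to lie within $2$ of each other, the sum is saturated to $R$ or $R-1$, and the smaller value is identified as $\oddleq(R/r)$, with $\frac12\evenleq(R-r\oddleq(R/r))$ copies of the larger value. Your Step~3, which uses that the minimum entry occurs at least once so that $t<r$, gives a cleaner justification of the paper's somewhat informal claim that one ``can always take $t\geq\oddleq(R/r)$.''
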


\begin{proof}
    In this proof, we call the tuple maximizing $\prod_{i=1}^{r}a_i$ over $(a_1,\dots,a_r)\in\mathscr{C}_{r,\text{odd}}^{\leq R}$ the\footnote{This tuple is indeed unique up to a permutation of its elements.} \emph{maximizing tuple}. If the maximizing tuple $(a_1,\dots,a_r)$ is such that $a_{j_1}>a_{j_2}+2$ for some $j_1,j_2$, then $(a_{j_1}-2)(a_{j_2}+2)>a_{j_{1}}a_{j_2}$, giving that
    \begin{align*}
        \prod_{i=1}^{r}a_i<(a_{j_1}-2)(a_{j_2}+2)\prod_{\substack{1\leq i\leq r\\ i\neq j_1,j_2}}a_i.
    \end{align*}
    So the maximizing tuple $(a_1,\dots,a_r)$ is such that $|a_{h_1}-a_{h_2}|\leq 2$ for all $h_1,h_2$. In particular, this means that the maximizing tuple is of the form $(t,\dots,t,t+2,\dots,t+2)$ up to a permutation of its elements, where the number of $(t+2)$'s is possibly zero.

    Recall that the elements of any tuple in $\mathscr{C}_{r,\text{odd}}^{\leq R}$ can sum to at most $R-1+\mathds{1}_{2\ZZ+R}(r)$ and that $r\oddleq(R/r)\leq R$, with equality only if $R/r\in2\ZZ+1$, which implies that $r\equiv R\text{~mod~} 2$. So, as the sum of the elements of the maximizing tuple is maximal, we can always take $t\geq \oddleq(R/r)$. However, if $t\geq \oddleq(R/r)+2$, then the sum of the elements of the tuple is at least
    \begin{align}\label{sum_of_elements_contra}
        \sum_{i=1}^{r}(\oddleq(R/r)+2)=r(\oddleq(R/r)+2)\geq r(\lfloor R/r\rfloor+1)\geq R+1,
    \end{align}
    a contradiction. So $t=\oddleq(R/r)$. Consequently, the problem reduces to maximizing the number of $(\oddleq(R/r)+2)$'s, which we henceforth denote as $H$, in the maximizing tuple.

    Noting that the sum of the elements of the maximizing tuple is $r\oddleq(R/r)+2H=(r-H)\oddleq(R/r)+H(\oddleq(R/r)+2)=R-1+\mathds{1}_{2\ZZ+R}(r)$, we find that the maximum value of $H$ is half of the largest even integer less than or equal to $R-r\oddleq(R/r)$. So the maximizing tuple is $(\sigma_{r,R}(1),\dots,\sigma_{r,R}(r))$, up to a permutation of its elements.
\end{proof}

To prove Theorem~\ref{upper_bound_thm}, starting from the formula given in Proposition~\ref{combinatorial_formula_for_number_of_occurrences}, we bound each factor in the product using Lemma~\ref{difference_bound} and maximize the product of these bounds using Lemma~\ref{maximizing_lemma}. Multiplying this maximum by the size of $\mathscr{C}_{r,\text{odd}}^{\leq R'}$ computed in Lemma~\ref{enumeration_of_comps} then yields the upper bound as desired.

\begin{proof}[\textbf{Proof of Theorem~\ref{upper_bound_thm}}]
    We first bound the product in \eqref{eqn_comb_formula}. Writing $p=x_1^{u_1}\cdots x_r^{u_r}$ and $w=y_1^{v_1}\cdots y_{R}^{v_{R}}$, let 
    \begin{align*}
        \mathcal{S}_{p,w}\defeq\left\{C\in \mathscr{C}_{r,\text{odd}}^{\leq R'}:\sum_{j\in A_i}v_{j+1-\delta_{x_1,y_1}}\geq u_i \text{~for all~} A_i\in C([R'])\right\}.
    \end{align*}
    That is, $\mathcal{S}_{p,w}$ is the set of tuples in $\mathscr{C}_{r,\text{odd}}^{\leq R'}$ whose corresponding product in \eqref{eqn_comb_formula} is nonzero. Let $(c_1,\dots,c_r)=C\in\mathcal{S}_{p,w}$ and $A_i\in C([R'])$ and $v_j'=v_{j+1-\delta_{x_1,y_1}}$. As $\sum_{j\in A_i}v'_{j}-u_i\geq 0$, by Lemma~\ref{difference_bound},
    \begin{align}\label{usual_estimate}
        {\sum_{j\in A_i}v'_{j}\choose u_i}-{\sum_{j\in A_i\setminus\{\max(A_i)\}}v'_{j}\choose u_i}\leq v'_{\max(A_i)}\left(\sum_{j\in A_i}v'_{j}-u_i+1\right)^{u_i-1}.
    \end{align}
    Noting that $\sum_{j\in A_i}v'_{j}\leq |A_i|\cdot R_{\max}'= R_{\max}'(c_i+1)/2$, we have 
    \begin{align}\label{RHS_maxing}
        v'_{\max(A_i)}\left(\sum_{j\in A_i}v'_{j}-u_i+1\right)^{u_i-1}\leq R_{\max}'\left(\frac{R_{\max}'}{2}(c_i+1)-r_{\min}+1\right)^{r_{\max}-1}.
    \end{align}
    We now note that any $r$-tuple in $\mathscr{C}_{r,\text{odd}}^{\leq R'}$ that maximizes $\prod_{i=1}^{r}a_i$ over all $(a_1,\dots,a_r)\in\mathscr{C}_{r,\text{odd}}^{\leq R'}$ also maximizes
    \begin{align*}
        \prod_{i=1}^{r}R_{\max}'\left(\frac{R_{\max}'}{2}(a_i+1)-r_{\min}+1\right)^{r_{\max}-1},
    \end{align*}
    provided that each $(R_{\max}'(a_i+1)/2)-r_{\min}\geq0$. By Lemma~\ref{maximizing_lemma}, an $r$-tuple maximizing $\prod_{i=1}^{r}a_i$ over $\mathscr{C}_{r,\text{odd}}^{\leq R'}$ is $(\sigma_{r,R'}(1),\dots,\sigma_{r,R'}(r))$. Note that at least one $c_h$ must be at most $\oddleq(R'/r)$; otherwise, $\sum_{i=1}^{r}c_i\geq R'+1$, following from the same computation as in \eqref{sum_of_elements_contra}. 
    
    Now verifying that each $(R_{\max}'(\sigma_{r,R'}(i)+1)/2)-r_{\min}\geq0$, by assumption on $C$, we have for all $1\leq i\leq r$ that
    \begin{align*}
        \frac{R_{\max}'}{2}(\sigma_{r,R'}(i)+1)-r_{\min}
        \geq\frac{R_{\max}'}{2}(c_h+1)-r_{\min} \geq \sum_{j\in A_h}v'_{j}-u_h\geq 0.
    \end{align*}
    Thus, the product of the right-hand side of \eqref{RHS_maxing} over all $i$ is at most
    \begin{align*}
        \prod_{i=1}^{r}R_{\max}'\left(\frac{R_{\max}'}{2}(\sigma_{r,R'}(i)+1)-r_{\min}+1\right)^{r_{\max}-1}.
    \end{align*}
    Note that the product in \eqref{eqn_comb_formula} corresponding to any $C\in\mathscr{C}_{r,\text{odd}}^{\leq R'}\setminus\mathcal{S}_{p,w}$ is zero. Therefore, by Proposition~\ref{combinatorial_formula_for_number_of_occurrences} and Lemma~\ref{enumeration_of_comps},
    \begin{align*}
        c_p(w)=c_p(w')&=\sum_{C\in\mathcal{S}_{p,w}}\prod_{A_i\in C([R'])}\left({\sum_{j\in A_i}v'_{j}\choose u_i}-{\sum_{j\in A_i\setminus\{\max(A_i)\}}v'_{j}\choose u_i}\right)\\
        &\leq \sum_{C\in\mathcal{S}_{p,w}}\prod_{i=1}^{r}R_{\max}'\left(\frac{R_{\max}'}{2}(\sigma_{r,R'}(i)+1)-r_{\min}+1\right)^{r_{\max}-1}\\
        &\leq\left|\mathscr{C}_{r,\text{\normalfont odd}}^{\leq R'}\right|\prod_{i=1}^{r}R_{\max}'\left(\frac{R_{\max}'}{2}(\sigma_{r,R'}(i)+1)-r_{\min}+1\right)^{r_{\max}-1}\\
        &={\left\lfloor\frac{R'-r}{2}\right\rfloor+r\choose r}\prod_{i=1}^{r}R_{\max}'\left(\frac{R_{\max}'}{2}(\sigma_{r,R'}(i)+1)-r_{\min}+1\right)^{r_{\max}-1}.
    \end{align*}
    Lastly, letting $p$ and $w$ be alternating, by observing the chain of equalities
    \begin{align*}
        c_p(w)&=c_p(w')=\sum_{C\in\mathscr{C}_{r,\text{odd}}^{\leq R'}}\prod_{A_i\in C([R'])}\left({\sum_{j\in A_i}1\choose 1}-{\sum_{j\in A_i\setminus\{\max(A_i)\}}1\choose 1}\right)\\
        &={\left\lfloor\frac{R'-r}{2}\right\rfloor+r\choose r}={\left\lfloor\frac{R'-r}{2}\right\rfloor+r\choose r}\prod_{i=1}^{r}1\left(\frac{1}{2}(\sigma_{r,R'}(i)+1)-1+1\right)^{1-1},
    \end{align*}
    we see that equality is indeed obtained as claimed.
\end{proof}

\begin{proof}[Proof of Remark~\ref{relaxed_upper_bound}]
    The problem of maximizing $\prod_{i=1}^{r}a_i$ over all $(a_1,\dots,a_r)\in\RR^r_{+}$ given the constraint $\sum_{i=1}^{r}a_i\leq R'$ follows from the AM-GM inequality. In particular, as seen in \cite[Lemma 1]{AM_GM_max}, the $r$-tuple maximizing $\prod_{i=1}^{r}a_i$ over $\RR^r_{+}$ given that $\sum_{i=1}^{r}a_i= R'$ is $(R'/r,\dots,R'/r)$. Since $R'/r\geq\oddleq (R'/r)$, we have $R_{\max}((R'/r)+1)/2-r_{\min}\geq0$. Thus, the product of the right-hand side of \eqref{RHS_maxing} over all $i$ is at most
    \begin{align*}
        \left(R_{\max}'\left(\frac{R_{\max}'}{2}\left(\frac{R'}{r}+1\right)-r_{\min}+1\right)^{r_{\max}-1}\right)^r,
    \end{align*}
    from which Remark~\ref{relaxed_upper_bound} follows.
\end{proof}

\section{Directions for Future Research}\label{section_4}

\subsection{Generalizing Theorem~\ref{polynomial_theorem} to $m$-ary words}\label{polynomial_m_ary_section}

Let $B_p^{(m)}(n,k)$ be the number of $m$-ary words of length $n$ with exactly $k$ occurrences of $p$. Before generalizing Theorem~\ref{polynomial_theorem} to $m$-ary words, we first need to determine what form we expect $B_p^{(m)}(n,k)$ to take. In \cite{subseq_freq}, it was shown that $\deg(B_p(n,1))=\ell-r+1$ by a balls-and-bins interpretation. In this paper, a complicated and delicate argument was given in the proof of Theorem~\ref{polynomial_theorem} to show that we could apply a morally similar balls-and-bins interpretation to $B_p(n,k)$ for all $k\in\ZZ^+$. A natural question following from this observation in the binary case is: do we expect $B_p^{(m)}(n,k)$ to take a form similar to $B_p^{(m)}(n,1)$ for any positive integer $k$? We expect the answer to be yes, and perhaps expectedly, by a generalized balls-and-bins interpretation. To motivate our forthcoming conjecture on the form of $B_p^{(m)}(n,k)$, we generalize Proposition~\ref{menon_singh_result_k_1} in the following proposition, the proof of which we defer to Appendix~\ref{appendix}.

\begin{prop}\label{m_ary_k_1}
    Let $p$ be an $m$-ary word of length $\ell$ with $r$ runs. Then $B_p^{(m)}(n,1)=q_1(n)(m-1)^n+q_2(n)(m-2)^n$, where $q_1$ is a polynomial of degree $\ell-r+1$ and $q_2$ is a polynomial of degree $r-2$. If $r=1$, take $q_2=0$.
\end{prop}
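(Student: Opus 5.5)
We count $m$-ary words with exactly one occurrence of $p$ by a canonical structural decomposition, and then read off the form of the count from a generating function. Write $p=x_1^{u_1}\cdots x_r^{u_r}$. If $w$ has exactly one occurrence $i_1<\cdots<i_\ell$, that occurrence is both the first and the last one. Greedy (first-occurrence) matching then constrains the gaps: before $i_1$ there is no $x_1$, and between $i_{j}$ and $i_{j+1}$ there is no $p_{j+1}$. Last-occurrence matching gives the mirror-image constraints, and after $i_\ell$ there is no $x_r$. Uniqueness should be equivalent to the conjunction of these. So $w$ is determined by its $\ell+1$ gap words, each of which avoids an explicit set of letters.

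The plan is to work this out run by run.
\begin{enumerate}[(i)]
    \item The prefix before $i_1$ avoids $x_1$, and the suffix after $i_\ell$ avoids $x_r$. These give $(m-1)$-letter free words.
    \item Inside a run $x_j^{u_j}$, a gap between two consecutive matched copies of $x_j$ must avoid $x_j$ for first-occurrence reasons. It must also avoid every letter that could substitute elsewhere. I expect the correct condition to be that this gap avoids $x_j$ and that any extra letter placed there does not create a new occurrence. As in the binary case (Proposition~\ref{menon_singh_result_k_1}, via the Menon--Singh balls-and-bins argument), these internal gaps should be forced to consist only of letters from a restricted set. For binary words they are forced to be empty or a single run of $1-x_j$'s, which gives the $\ell-r+1$ polynomial factor.
    \item At a boundary between runs $x_j\neq x_{j+1}$, the gap must avoid both $x_{j+1}$ (first occurrence) and $x_j$ (last occurrence). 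It is therefore a free word over $m-2$ letters.
\end{enumerate}

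The main obstacle is to verify this characterization exactly, in particular that the internal gaps inside a run behave like the binary case. I would prove it by a direct exchange argument: any forbidden letter in a gap yields a second, lexicographically different occurrence, obtained by swapping one matched index. This should mirror the proof of Lemma~\ref{at_most_L_minus_1}. Conversely, if no forbidden letters appear, I would show that the first and last occurrences coincide, which forces uniqueness.

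Given the characterization, the generating function is a product of factors of the form $1/(1-(m-1)z)$, coming from the $(m-1)$-ary gaps (the two ends and the $\ell-r$ gaps internal to runs), and $1/(1-(m-2)z)$, coming from the $r-1$ between-run gaps. Each matched letter contributes a factor of $z$. This gives
\begin{align*}
    \sum_n B_p^{(m)}(n,1)z^n=\frac{z^\ell}{(1-(m-1)z)^{\ell-r+2}(1-(m-2)z)^{r-1}}.
\end{align*}
A partial fraction decomposition in $z$ then produces a term $q_1(n)(m-1)^n$ with $\deg q_1=\ell-r+1$, coming from a pole of order $\ell-r+2$. It also produces a term $q_2(n)(m-2)^n$ with $\deg q_2=r-2$, coming from a pole of order $r-1$. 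When $r=1$ the second factor is absent, so $q_2=0$. The leading coefficients are nonzero because the two poles are distinct, which makes the degrees exact. Since the statement asks only for the form, I would also allow the exponent counts above to shift by the boundary effects found in the characterization step, as long as the pole orders stay $\ell-r+2$ and $r-1$.
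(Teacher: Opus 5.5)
Your proposal is correct and follows the paper's route. You decompose $w$ around its unique occurrence into $\ell+1$ gaps: $\ell-r+2$ of them are free over $m-1$ letters and $r-1$ are free over $m-2$ letters. This gives the generating function $z^\ell/((1-(m-1)z)^{\ell-r+2}(1-(m-2)z)^{r-1})$, and you read off the degrees from the pole orders.

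Your first-occurrence/last-occurrence criterion already settles step (ii) outright. The gap between $p_s$ and $p_{s+1}$ must avoid exactly $\{p_s,p_{s+1}\}$, which is just $x_j$ inside a run, and first $=$ last forces uniqueness. So the hedges about extra forbidden letters and shifting exponents can be dropped. This argument also supplies the justification of the gap characterization that the paper only asserts.
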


The polynomials $q_1,q_2$ are dependent only on $\ell$, $r$, and $m$; however, for our purposes of substantiating the following conjecture, merely knowing their degrees is sufficient.

\begin{conj}\label{m_ary_polynomial_conj}
    Let $k\in\ZZ^+$ and $p$ be an $m$-ary word of length $\ell$ with $r\geq2$ runs. Then for $n$ sufficiently large, 
    \begin{align*}
        B^{(m)}_p(n,k)=\sum_{i=1}^{m-1}q_i(n)(m-i)^n,
    \end{align*}
    where each $q_i$ is a polynomial and $\deg(q_1)\leq\ell-r+1$. Furthermore, if either
    \begin{enumerate}[(i)]
        \item $p$ has a run of length $1$, or
        \item $p$ has a run of length $b\geq2$ at its boundary such that ${b+s\choose b}=k$ for some $s\geq0$,
    \end{enumerate}
    then $\deg(q_1)=\ell-r+1$.
\end{conj}

We note that, upon seeing a preliminary draft of this paper, Valentino Vito \cite{vito_communication} was able to resolve Conjecture~\ref{m_ary_polynomial_conj} in the affirmative using ChatGPT 5.5. An article regarding this resolution is in preparation \cite{vito_paper}.

Observe that the $m=2$ case of Conjecture~\ref{m_ary_polynomial_conj} is simply Theorem~\ref{polynomial_theorem}. The second part of Conjecture~\ref{m_ary_polynomial_conj} follows from a straightforward generalization of the constructions given in the proof of Theorem~\ref{polynomial_theorem} via Proposition~\ref{m_ary_k_1}. We expect that the proof of Theorem~\ref{polynomial_theorem} should be able to be generalized to prove Conjecture~\ref{m_ary_polynomial_conj}; however, there are further considerations in the generalized case which must be taken into account.

\begin{definition}[Naive Generalization of $\text{SW}_p(k)$]
    Let $p$ be an $m$-ary word. The \textbf{naive generalization} of $\text{\normalfont SW}_p(k)$ is the set of words on $\{0,1,\dots,m-1,\hatted{0},\hatted{1},\dots,\hatted{m-1}\}$ with exactly $k$ occurrences of $p$ such that
    \begin{enumerate}[(i)]
        \item all runs of non-hatted letters are short,
        \item all runs of $\hatted{a}$'s are of length $1$ and do not have any $a$'s as surrounding letters for all $0\leq a\leq m-1$, and
        \item for $x\in\{0,1,\dots,m-1\}$, replacing $\hatted{x}$ with a run of $x$'s of arbitrary length does not cause the number of occurrences of $p$ to exceed $k$.
    \end{enumerate}
\end{definition}

We observe that the naive generalization of $\text{SW}_p(k)$ quickly leads to issues. In particular, this naive generalization of $\text{SW}_p(k)$ induces this set to no longer be finite for $m\geq3$. The generalized proof of Lemma~\ref{short_words_constant_in_n} fails since for $m\geq3$, an $m$-ary word can have arbitrarily many runs without a single occurrence of another $m$-ary word $p$ (e.g., $p=012$ and $w=010101\cdots$). Thus, a successful generalization of the proof of Theorem~\ref{polynomial_theorem} requires a refinement of Definition~\ref{SW_def}, which we expect to correspond to the generalized proof harboring further difficulties and complications. Nevertheless, if the right refinement of Definition~\ref{SW_def} is constructed, Conjecture~\ref{m_ary_polynomial_conj} may very well follow by much of the same reasoning as in the proof of Theorem~\ref{polynomial_theorem}.

\subsection{Completely determining when $\deg(B_p(n,k))=\ell-r+1$}\label{second_part_section}

In Theorem~\ref{polynomial_theorem}, assuming that $r\geq2$, we give two conditions on $p$ and $k$ sufficient for the degree of $B_p(n,k)$ to match its upper bound of $\ell-r+1$. By \cite[Propositions 3.2, 3.4, 3.8, 3.11]{subseq_freq}, we observe that for $1\leq k\leq4$, at least one of these conditions being satisfied is also necessary for $\deg(B_p(n,k))=\ell-r+1$. Originally, this made us conjecture that this holds for all $k\in\ZZ^+$. However, upon seeing a preliminary draft of this paper, Valentino Vito \cite{vito_communication} produced the counterexample of $k=5$ and $p=0011$ using ChatGPT 5.5. Thus, since these two conditions are not exhaustive as we initially thought, we propose completely determining when $\deg(B_p(n,k))=\ell-r+1$.

%\begin{conj}\label{conj_conditions_are_also_necessary}
   % Let $k\in\ZZ^+$ and $p$ be a binary word of length $\ell$ with $r\geq2$ runs. Then for $n$ sufficiently large, $\deg(B_p(n,k))=\ell-r+1$ if and only if either
   % \begin{enumerate}[(i)]
    %    \item $p$ has a run of length $1$, or
     %   \item $p$ has a run of length $b\geq2$ at its boundary such that ${b+s\choose b}=k$ for some $s\geq0$.
    %\end{enumerate}
%\end{conj}

\subsection{Generalizing Theorem~\ref{upper_bound_thm} to $m$-ary words}\label{m_ary_bound}

To generalize Theorem~\ref{upper_bound_thm} to $m$-ary words, one must first generalize Proposition~\ref{combinatorial_formula_for_number_of_occurrences}. We expect such a combinatorial formula to be of a similar form; however, the indexing set in the summation will not be as nice as in the binary case since the runs of $w$ need no longer be periodic in value. Correspondingly, the $m$-ary analog of each $A_i$ will depend on more parameters and be less simple in general. Once an $m$-ary combinatorial formula for $c_p(w)$ is proven, we expect a strategy similar to that of the proof of Theorem~\ref{upper_bound_thm} to yield the desired bound for $m$-ary words.

\subsection{Improving Theorem~\ref{upper_bound_thm}}\label{improving_upper_bound}

While our inequality is sharp, the process of bounding the product in \eqref{eqn_comb_formula} in particular lends many paths toward improvement. In particular, the elementary nature of Lemma~\ref{difference_bound} suggests that improvement is well within reach. Since ${x+m\choose k}-{x\choose k}=\frac{m}{(k-1)!}x^{k-1}+O(x^{k-2})$, using a monomial bound as in Lemma~\ref{difference_bound} is natural, but using other polynomials of degree $k-1$ or even other functions which are $\Theta(x^{k-1})$ is likely to be fruitful.

Another improvement lies in computing the size of $\mathcal{S}_{p,w}$, or at least obtaining better upper bound than $|\mathcal{S}_{p,w}|\leq |\mathscr{C}_{r,\text{odd}}^{\leq R'}|$. A nonzero lower bound for $|\mathcal{S}_{p,w}|$ would also be interesting, as it would lead to a nonzero lower bound for $c_p(w)$. We expect obtaining these nontrivial bounds to be very difficult for general $p$ and $w$.

One could also restrict to particular classes of $p$ and $w$ to yield a better upper bound on $c_p(w)$ over these classes. Working in complete generality induces Theorem~\ref{upper_bound_thm} to be crude to account for edge cases (e.g., when all runs of $p$ and $w$ are of the same length, respectively). Naturally, making more assumptions on the run structures of $p$ and $w$ will permit various refinements of Theorem~\ref{upper_bound_thm}.

\appendix

\section{Supplementary Proofs}\label{appendix}

\subsection{Proof of Example~\ref{upper_bound_asymptotics}}
    If $p$ is alternating, then $r_{\max}=1$ and $r=\ell$, inducing the simplification $B(n)=\binom{\lfloor(R-\ell)/2\rfloor+\ell}{\ell}R_{\max}^{\ell}$. So
    \begin{align*}
        B(n)&=\binom{\lfloor(R-\ell)/2\rfloor+\ell}{\ell}R_{\max}^{\ell}=\binom{\lfloor(a\sqrt{n}(1+o(1))-\ell)/2\rfloor+\ell}{\ell}\left(b\sqrt{n}(1+o(1))\right)^{\ell}\\
        &={\frac{a}{2}\sqrt{n}(1+o(1))\choose\ell}\left(b\sqrt{n}(1+o(1))\right)^{\ell}=\left(\frac{(a/2)^{\ell}}{\ell!}n^{\ell/2}(1+o(1))\right)\left(b^{\ell}n^{\ell/2}(1+o(1))\right)\\
        &=\frac{(ab/2)^{\ell}}{\ell!}n^{\ell}(1+o(1)).
    \end{align*}
    Noting that $T(n)={n\choose\ell}=\frac{1}{\ell!}n^{\ell}(1+o(1))$, it follows that $T(n)/B(n)$ tends to $(ab/2)^{-\ell}$.

\subsection{Proof of Proposition~\ref{m_ary_k_1}}
    This proof generalizes \cite[Proposition 3.2]{subseq_freq} via a generalized balls-and-bins interpretation. In particular, like the proof of \cite[Proposition 3.2]{subseq_freq}, we determine which letters can be inserted into the spaces between letters of $p$ without inducing the existence of another occurrence of $p$.

    Consider a space between two distinct letters of $p$. Then we can insert letters not equal to these two letters, of which there are $m-2$, in this space. Noting that such spaces are exactly the spaces between two consecutive runs of $p$, it follows that there are $r-1$ of these spaces.

    Consider a space between two equal letters of $p$. Then we can insert letters not equal to the value of these two equal letters, of which there are $m-1$, in this space. Noting that such a space is in the middle of a run in $p$, it follows that the number of these spaces is $\ell-r$.

    Lastly, consider the spaces before the first letter of $p$ and after the last letter of $p$. Before the first letter of $p$, we can insert letters not equal to $p_1$, of which there are $m-1$, in this space. After the last letter of $p$, we can insert letters not equal to $p_{\ell}$, of which there are $m-1$, in this space.

    Thus, there are $\ell-r+2$ spaces of $p$ where $m-1$ letters can be inserted, and $r-1$ spaces of $p$ where $m-2$ letters can be inserted. For each $1\leq i\leq \ell-r+2$, let $a_i$ be the number of letters we insert into the $i$\textsuperscript{th} space of $p$ not between two consecutive runs. For each $\ell-r+3\leq i\leq \ell+1$, let $a_i$ be the number of letters we insert into the $i$\textsuperscript{th} space of $p$ between two consecutive runs. Then
    \begin{align*}
        B_p^{(m)}(n,1)=\sum_{a_1+\cdots+a_{\ell+1}=n-\ell}(m-1)^{\sum_{i=1}^{\ell-r+2}a_i}(m-2)^{\sum_{i=\ell-r+3}^{\ell+1}a_i}.
    \end{align*}
    Observing that the above summation is an $(\ell+1)$-fold Cauchy product, it follows that
    \begin{align*}
        \sum_{n\geq0} B_p^{(m)}(n,1) z^n=\frac{z^{\ell}}{(1-(m-1)z)^{\ell-r+2}(1-(m-2)z)^{r-1}}.
    \end{align*}
    By \cite[Theorem IV.9]{analytic_combo}, we have that $B_p^{(m)}(n,1)=q_1(n)(m-1)^n+q_2(n)(m-2)^n$, where $q_1$ is a polynomial of degree $\ell-r+1$ and $q_2$ is a polynomial of degree $r-2$ (if $r=1$, take $q_2=0$), as desired.

%\footnote{The Cauchy product is the discrete convolution of the coefficients of two power series.}

\section*{Acknowledgments}
This research was conducted at the 2025 Duluth REU with support from Jane Street Capital, NSF Grant 2409861, and donations from Ray Sidney and Eric Wepsic. We thank Joseph Gallian and Colin Defant for organizing the REU and providing this incredible opportunity. We recognize Noah Kravitz for his indispensable mentorship throughout the program and for suggesting this project. The writing of this paper was greatly improved under the suggestions of Eliot Hodges, Sean Li, Alex Moon, Jacob Paltrowitz, Carl Schildkraut, Alec Sun, and Sophie Zhu. We are grateful to Ruben Carpenter and Jacob Paltrowitz for identifying an error in a previous version of this work. We also thank Mitchell Lee, Rupert Li, and Maya Sankar for their advice throughout the program.

\bibliography{main}{}
\bibliographystyle{plain}

%Note that only runs of length at most $k$ are used to produce each of the $k$ occurrences of $p$ in $w$; otherwise, $c_p(w)>k$.

%, note that there is only one slot preceding $w_{h_1}$ to insert a long run of $(1-y_1)$'s since $h_1<\cdots<h_{\ell}$ is the first occurrence of $p$ in $w$, and there is only one slot following $w_{h_{\ell}'}$ to insert a long run of $(1-y_2)$'s since $h_1'<\cdots<h_{\ell}'$ is the last occurrence of $p$ in $w$.
    
    %Suppose that $p$ begins with $y_1$ and ends with $y_2$. Suppose a slot is not between a $u,u'$ pair. Then this slot is not in the middle of an occurrence of $p$ in $w$. Thus, this slot is either adjacent to $(1-y_1)$'s in the first run of $w$, or adjacent to $(1-y_2)$'s in the last run of $w$. Let $h_1<\cdots<h_{\ell}$ be the first occurrence of $p$ in $w$ and $h_1'<\cdots<h_{\ell}'$ be the last occurrence of $p$ in $w$. Then inserting a long run of $y_1$'s before $w_{h_1}$ or a long run of $y_2$'s after $w_{h_{\ell}'}$ both induce $c_p(w)>k$. So only long runs of $(1-y_1)$'s before $w_{h_1}$ and long runs of $(1-y_2)$'s after $w_{h_{\ell}'}$ are permitted. Finally, up to equality of the word, note that there is only one slot preceding $w_{h_1}$ to insert a long run of $(1-y_1)$'s since $h_1<\cdots<h_{\ell}$ is the first occurrence of $p$ in $w$, and there is only one slot following $w_{h_{\ell}'}$ to insert a long run of $(1-y_2)$'s since $h_1'<\cdots<h_{\ell}'$ is the last occurrence of $p$ in $w$.

\end{document}